\theoremstyle{plain}
\newtheorem{Thm}{Theorem}[section]
\newtheorem{Lem}[Thm]{Lemma}
\newtheorem{Prop}[Thm]{Proposition}
\newtheorem{Cor}[Thm]{Corollary}
\theoremstyle{definition}
\newtheorem{Def}[Thm]{Definition}
\newtheorem{example}[Thm]{Example}
\numberwithin{equation}{section}
\newcommand{\bnum}{\begin{enumerate}}
\newcommand{\enum}{\end{enumerate}}
\begin{document}
\begin{center}
\textbf{Weakly $r$-clean rings and weakly $\star$-clean rings}\\
\end{center}
\begin{center}
Ajay Sharma \\
\small{\it Department of Mathematical Sciences, Tezpur University,
 \\ Napaam, Tezpur-784028, Assam, India.\\
Email: ajay123@tezu.ernet.in}
\end{center}

\begin{center}
Dhiren Kumar Basnet\\
\small{\it Department of Mathematical Sciences, Tezpur University,
 \\ Napaam, Tezpur-784028, Assam, India.\\
Email: dbasnet@tezu.ernet.in}

\end{center}
\noindent \textit{\small{\textbf{Abstract:}  }} Motivated by the concept of weakly clean rings, we introduce the concept of weakly $r$-clean rings. We define an element $x$ of a ring $R$ as weakly $r$-clean if it can be expressed as $x=r+e$ or $x=r-e$ where $e$ is an idempotent and $r$ is a regular element of $R$. If all the elements of $R$ are weakly $r$-clean then $R$ is called a weakly $r$-clean ring. We discuss some of its properties in this article. Also we generalise this concept of weakly $r$-clean ring to weakly $g(x)$-$r$-clean ring, where $g(x) \in C(R)[x]$ and $C(R)$ is the centre of the ring $R$. Finally we introduce the concept of weakly $\star$-clean and $\star$-$r$-clean ring and discuss some of their properties. 
\bigskip

\noindent \small{\textbf{\textit{Key words:}}  $r$-clean ring, Weakly $r$-clean ring, $\star$-clean ring, Weakly $\star$-clean ring, $\star$-$r$-clean ring.} \\
\smallskip

\noindent \small{\textbf{\textit{$2010$ Mathematics Subject Classification:}}  16N40, 16U99.} \\
\smallskip

\bigskip

\section{INTRODUCTION}
$\mbox{\hspace{.5cm}}$ Here rings $R$ are associative with unity unless otherwise indicated. The Jacobson radical, set of units, set of idempotents, set of regular elements, set of nilpotent elements and centre of a ring $R$ are denoted by $J(R)$, $U(R)$, $Idem(R)$, $Reg(R)$, $Nil(R)$ and $C(R)$ respectively. Nicholson\cite{8} called an element $x$ of  $R$, a clean element, if $x=e+u$ for some $e\in Idem(R)$, $u\in U(R)$ and called the ring $R$ as clean ring if all its elements are clean. Weakening the condition of clean element, M.S. Ahn and D.D. Anderson\cite{1} defined  an element $x$ as weakly clean if $x$ can be expressed as $x=u+e$ or $x=u-e$, where $u\in U(R)$, $e\in Idem(R)$. Generalising the idea of clean element, N. Ashrafi and E. Nasibi \cite{2, 3} introduced the concept of $r$-clean element as follows: an element $x$ of a ring $R$ is said to be $r$-clean if it can be written as a sum of an idempotent and a regular element. Further if all the elements of $R$ are $r$-clean then the ring $R$ is called $r$-clean ring.  Motivated by these ideas we define an element of a ring $R$ as weakly $r$-clean if $x$ can be expressed as $x=r+e$ or $x=r-e$, where $r\in Reg(R)$, $e\in Idem(R)$ and call the ring $R$ to be weakly $r$-clean if all its elements are so. We show that in case of an abelian ring the concept of weakly $r$-clean rings coincides with that of weakly clean rings and hence with that of weakly exchange rings. We also show that the center of weakly $r$-clean rings are not so. However if $R$ has no non-trivial idempotents then  the center of a weakly $r$-clean ring is weakly $r$-clean. Further we discuss some interesting properties of weakly $g(x)$-$r$- clean ring, where $g(x) \in C(R)[x]$. Finally we define the concept of weakly $\star$-clean ring and $\star$-$r$-clean ring. A ring $R$ is a $\star$-ring (or ring with involution) if there exists an operation $\star \,\,:\,\,R\rightarrow R$ such that for all $x,y\in R$, $(x+y)^{\star}=x^{\star}+y^{\star}$, $(xy)^{\star}=y^{\star}x^{\star}$ and $(x^{\star})^{\star}=x$. An element $p$ of a $\star$-ring is a projection if $p^2=p=p^{\star}$. Obviously, $0$ and $1$ are projections of any $\star$-ring. Henceforth $P(R)$ will denote the set of all projections in a $\star$-ring. We define an element  $x$ in a $\star$-ring $R$ is weakly $\star$-clean element if $x=u+p$ or $x=u-p$, where $u\in U(R)$ and $p\in P(R)$. If all the elements of $R$ are weakly $\star$-clean then the $\star$-ring $R$ is called weakly $\star$-clean. We show that for an abelian $\star$-clean ring $R$, if every idempotent of the form $e=ry$ or $e=yr$ is projection, where $r\in Reg(R)$, then $R$ is $\star$-$r$-clean if and only if $R$ is $\star$-clean.

\section{Weakly $r$-clean ring}
\begin{Def}
  An element $x$ in a ring $R$ is called weakly $r$-clean if it can be written as  $x=r+e$ or $x=r-e$, for some $r\in Reg(R)$ and $e\in Idem(R)$. If all the elements of $R$ are weakly $r$-clean, then the ring $R$ is called a weakly $r$-clean ring.
\end{Def}
For example all clean rings, weakly clean rings and $r$-clean rings are weakly $r$-clean ring. The following theorem is obvious.
\begin{Thm}
  Homomorphic image of weakly $r$-clean ring is weakly $r$-clean.
\end{Thm}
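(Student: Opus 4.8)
The plan is to show that any surjective ring homomorphism $\phi\colon R\to S$ transports the weakly $r$-clean property from $R$ to $S$; since a homomorphic image of $R$ is precisely the image $\phi(R)$ of such a $\phi$ (taken as a surjection onto it), this suffices. First I would fix an arbitrary element $y\in S$ and, using surjectivity, choose a preimage $x\in R$ with $\phi(x)=y$. Because $R$ is weakly $r$-clean, we may write $x=r+e$ or $x=r-e$ for some $r\in Reg(R)$ and $e\in Idem(R)$. Applying $\phi$ and using additivity gives $y=\phi(x)=\phi(r)+\phi(e)$ or $y=\phi(r)-\phi(e)$, so the entire argument reduces to checking that $\phi$ carries idempotents to idempotents and regular elements to regular elements.

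The idempotent part is immediate from multiplicativity: $\phi(e)^2=\phi(e^2)=\phi(e)$, whence $\phi(e)\in Idem(S)$. For the regular part I would unwind the definition of a von Neumann regular element. Saying $r\in Reg(R)$ means there exists $s\in R$ with $rsr=r$; applying $\phi$ then yields $\phi(r)\phi(s)\phi(r)=\phi(rsr)=\phi(r)$, so $\phi(s)$ witnesses that $\phi(r)\in Reg(S)$. Consequently each of the two displayed expressions realizes $y$ as a sum or difference of a regular element and an idempotent of $S$, i.e. $y$ is weakly $r$-clean in $S$.

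Since $y\in S$ was arbitrary, every element of $S$ is weakly $r$-clean, so $S$ is a weakly $r$-clean ring, as required. I do not expect any genuine obstacle here: the statement is essentially a transport of structure along $\phi$, and the two closure facts are exactly the kind of one-line verifications that make the theorem \emph{obvious}, as the authors note. The only step that merits an explicit line rather than being purely formal is the stability of regular elements, which is why I would make a point of spelling out the computation $\phi(r)\phi(s)\phi(r)=\phi(r)$ rather than asserting it.
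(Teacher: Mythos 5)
Your proof is correct and is exactly the standard argument the paper has in mind when it declares the theorem obvious (the paper gives no written proof): push a weakly $r$-clean decomposition forward along a surjection, noting that $\phi(e)^2=\phi(e)$ and that $rsr=r$ implies $\phi(r)\phi(s)\phi(r)=\phi(r)$. Nothing further is needed.
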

However the converse is not true as $\mathbb{Z}_6\cong \mathbb{Z}/\langle 6 \rangle$ is a weakly $r$-clean ring, but $\mathbb{Z}$ is not weakly $r$-clean ring.
\begin{Thm}
  Let $\{R_\alpha\}$ be a collection of rings. Then the direct product $R=\underset{\alpha}{\prod}R_{\alpha}$ is weakly $r$-clean \textit{if and only if} each $R_{\alpha}$ is weakly $r$-clean ring and at most one $R_{\alpha}$ is not $r$-clean.
\end{Thm}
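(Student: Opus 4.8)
The plan is to reduce everything to a componentwise analysis of $R=\prod_\alpha R_\alpha$, based on two elementary observations. First, $e=(e_\alpha)\in Idem(R)$ if and only if each $e_\alpha\in Idem(R_\alpha)$, and $r=(r_\alpha)\in Reg(R)$ if and only if each $r_\alpha\in Reg(R_\alpha)$; the forward direction of the latter is immediate, and the backward direction follows by assembling componentwise quasi-inverses into $s=(s_\alpha)$ with $rsr=r$. Second, negation preserves regularity, since $rsr=r$ gives $(-r)(-s)(-r)=-r$. Consequently a global decomposition $x=r+e$ exists if and only if every component $x_\alpha$ is $r$-clean, and a global decomposition $x=r-e$ exists if and only if every component $x_\alpha$ admits a ``$-$'' decomposition $x_\alpha=r_\alpha-e_\alpha$.

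The technical heart is a lemma I would establish first: in any $r$-clean ring every element $y$ admits \emph{both} a ``$+$'' and a ``$-$'' decomposition. The ``$+$'' form is the definition of $r$-clean; for the ``$-$'' form, write the $r$-clean decomposition of $-y$ as $-y=s+f$ with $s\in Reg$ and $f\in Idem$, so that $y+f=-s\in Reg$ and hence $y=(-s)-f$. The same computation run in reverse shows that, in any ring, $y$ has a ``$-$'' decomposition if and only if $-y$ is $r$-clean. This reformulation is exactly what resolves the sign-mismatch that I expect to be the crux of the whole argument.

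For the ($\Leftarrow$) direction, assume each $R_\alpha$ is weakly $r$-clean and that every factor except (at most) one index $\alpha_0$ is $r$-clean. Given $x=(x_\alpha)$, inspect the single possibly-non-$r$-clean component $x_{\alpha_0}$, which by hypothesis has a ``$+$'' or a ``$-$'' decomposition. In the ``$+$'' case, every other $x_\alpha$ is $r$-clean, so all components admit ``$+$'' decompositions and $x=r+e$. In the ``$-$'' case, the lemma supplies a ``$-$'' decomposition for each $r$-clean component $x_\alpha$ with $\alpha\neq\alpha_0$, so all components admit ``$-$'' decompositions and $x=r-e$. Either way $x$ is weakly $r$-clean.

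For the ($\Rightarrow$) direction, each $R_\alpha$ is weakly $r$-clean because it is the image of $R$ under the projection $\pi_\alpha$, by the preceding theorem on homomorphic images. To see that at most one factor fails to be $r$-clean, I argue by contradiction: suppose two distinct factors $R_{\alpha_0},R_{\alpha_1}$ are not $r$-clean, and pick $a_0\in R_{\alpha_0}$, $a_1\in R_{\alpha_1}$ that are not $r$-clean. Build $x\in R$ with $x_{\alpha_0}=a_0$, $x_{\alpha_1}=-a_1$, and $x_\alpha=0$ elsewhere. A global ``$+$'' decomposition would force $a_0$ to be $r$-clean, and a global ``$-$'' decomposition would force $-a_1$ to have a ``$-$'' decomposition, equivalently (by the reformulation above) force $a_1$ to be $r$-clean; both are impossible. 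Hence $x$ is not weakly $r$-clean, contradicting the hypothesis on $R$, and the theorem follows.
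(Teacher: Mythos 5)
Your proposal is correct and takes essentially the same route as the paper: homomorphic images give one implication, an element with two incompatible bad components gives the ``at most one'' claim, and a case split on the sign of the decomposition in the distinguished factor gives the converse. The one substantive difference is that you isolate and prove the key lemma --- every element of an $r$-clean ring admits both a ``$+$'' and a ``$-$'' decomposition, equivalently $y=r-e$ is possible exactly when $-y$ is $r$-clean --- which the paper's $(\Leftarrow)$ direction uses silently when it writes $x_{\alpha}=r_{\alpha}-e_{\alpha}$ for the $r$-clean factors, so your explicit reduction supplies a justification the paper omits.
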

\begin{proof}
  Let $R$ be weakly $r$-clean ring. Then being homomorphic image of $R$ each $R_{\alpha}$ is weakly $r$-clean. Suppose $R_{\alpha_1}$ and $R_{\alpha_2}$ are not $r$-clean, where ${\alpha}_1\neq {\alpha}_2$. Since $R_{\alpha_1}$ is not $r$-clean so not all elements $x\in R_{\alpha_1}$ of the form $x=r-e$ where $r\in Reg(R_{\alpha_1})$ and $e\in Idem(R_{\alpha_1})$. As $R_{\alpha_1}$ is weakly $r$-clean so there exists $x_{\alpha_1}\in R_{\alpha_1}$ with $x_{\alpha_1}=r_{\alpha_1}+e_{\alpha_1}$, where $r_{\alpha_1}\in Reg(R_{\alpha_1})$ and $e_{\alpha_1}\in Idem(R_{\alpha_1})$ but $x_{\alpha_1}\neq r-e$ for any $r\in Reg(R_{\alpha_1})$ and $e\in Idem(R_{\alpha_1})$. Similarly there exists $x_{\alpha_2}\in R_{\alpha_2}$ with $x_{\alpha_2}=r_{\alpha_2}-e_{\alpha_2}$, where $r_{\alpha_2}\in Reg(R_{\alpha_2})$ and $e_{\alpha_2}\in Idem(R_{\alpha_2})$, but $x_{\alpha_2}\neq r+e$ for any $r\in Reg(R_{\alpha_2})$ and $e\in Idem(R_{\alpha_2})$. Define $x=(x_\alpha)\in R$ by \begin{align*}
    x_\alpha &=x_{\alpha}\,\, \,\,\,\,\,\,\,if\,\, \alpha \in \{\alpha_1,\alpha_2\} \\
             &=0 \,\,\,\,\,\,\,\,\,\,\,\,\,if \,\,\alpha \notin \{\alpha_1,\alpha_2\}
  \end{align*}
  Then clearly $x\neq r\pm e$ for any $r\in Reg(R)$ and $e\in Idem(R)$. Hence at most one $R_\alpha$ is not $r$-clean.\\
  $(\Leftarrow)$ If each $R_\alpha$ is $r$-clean then $R=\prod R_\alpha$ is $r$-clean and hence weakly $r$-clean. Assume $R_{\alpha_0}$ is weakly $r$-clean but not $r$-clean and that all other $R_{\alpha}$'s are $r$-clean. If $x=(x_\alpha) \in R$ then in $R_{\alpha_0}$ we can write $x_{\alpha_0}=r_{\alpha_0}+e_{\alpha_0}$ or $x_{\alpha_0}=r_{\alpha_0}-e_{\alpha_0}$, where $r_{\alpha_0}\in Reg(R_{\alpha_0})$ and $e_{\alpha_0}\in Idem(R_{\alpha_0})$. If $x_{\alpha_0}=r_{\alpha_0}+e_{\alpha_0}$ then for $\alpha \neq \alpha_0$ let, $x_{\alpha}=r_{\alpha}+e_{\alpha}$ and if $x_{\alpha_0}=r_{\alpha_0}-e_{\alpha_0}$ then for $\alpha \neq \alpha_0$ let, $x_{\alpha}=r_{\alpha}-e_{\alpha}$ then $r=(r_\alpha)\in Reg(R)$ and $e=(e_\alpha)\in Idem(R)$ such that $x=r+e$ or $x=r-e$ and consequently $R$ is weakly $r$-clean ring.
\end{proof}

\begin{Lem}
  Let $R$ be a ring with no zero divisor. Then $R$ is weakly clean if and only if $R$ is weakly $r$-clean.
\end{Lem}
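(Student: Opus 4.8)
The plan is to exploit the very restrictive structure that the absence of zero divisors forces on the idempotents and the regular elements of $R$, after which both implications come out quickly.

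One direction needs no hypothesis at all. Since every unit $u$ satisfies $u=uu^{-1}u$, every unit is regular, and so any weakly clean representation $x=u+e$ or $x=u-e$ is automatically a weakly $r$-clean one. Hence weakly clean always implies weakly $r$-clean (as already noted right after the definition of weakly $r$-clean), and this half is immediate.

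For the converse I would first pin down exactly what $Idem(R)$ and $Reg(R)$ are. If $e^2=e$, then $e(1-e)=0$, and the lack of zero divisors forces $e\in\{0,1\}$, so $Idem(R)=\{0,1\}$. If $r\in Reg(R)$, write $r=rsr$ for some $s\in R$; then $r(1-sr)=0$ and $(1-rs)r=0$, so either $r=0$ or $sr=rs=1$, giving $Reg(R)=\{0\}\cup U(R)$. These two observations are the whole content of the lemma. Now take any $x\in R$ together with a weakly $r$-clean representation $x=r+e$ or $x=r-e$. If $r\in U(R)$, this expression already has the form (unit)$\,\pm\,$(idempotent), so $x$ is weakly clean. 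The only remaining possibility is $r=0$, which by the above forces $x=\pm e\in\{0,1,-1\}$; each of these is visibly weakly clean, since $0=1-1$, $1=1+0$, and $-1=(-1)+0$, using that $\pm1\in U(R)$ and $0,1\in Idem(R)$. Thus every element of $R$ is weakly clean.

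I do not anticipate a genuine obstacle: the entire argument reduces to the structural identity $Reg(R)=\{0\}\cup U(R)$, after which the implication is a short case check. The only point deserving care is the degenerate case $r=0$, where one must not attempt to reuse the useless representation $x=\pm e$ but instead exhibit a fresh weakly clean decomposition for the three elements $0,1,-1$ directly, as above.
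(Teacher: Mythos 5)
Your proof is correct and follows essentially the same route as the paper: the key observation in both is that in a ring without zero divisors every nonzero regular element is a unit (from $r(1-yr)=0$), after which the equivalence is immediate. Your treatment is slightly more careful than the paper's, since you also record $Idem(R)=\{0,1\}$ and explicitly dispose of the degenerate case $r=0$, which the paper leaves to the reader.
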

\begin{proof}
  Let $r(\neq 0)\in Reg(R)$, then $r=ryr$ for some $y\in R$, \textit{i.e.}, $r(1-yr)=0$, which implies that $r$ is a unit. Now the result follows immediately.
\end{proof}
\begin{Lem} \label{10}
  Let $R$ be a ring and $e^2=e\in R$, $a=u-f \in eRe$, where $u\in U(eRe)$, $f\in Idem(eRe)$. Then there exist elements $v\in U(R)$ and $\overline{e}\in Idem(R)$ such that $a=v-\overline{e}$.
\end{Lem}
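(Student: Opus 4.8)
The plan is to promote the corner-ring unit $u$ and corner-ring idempotent $f$ to a genuine unit and idempotent of the whole ring $R$ by exploiting the complementary idempotent $1-e$. The single fact driving everything is that $e$ and $1-e$ are orthogonal idempotents summing to $1$, so $e(1-e)=(1-e)e=0$; since $u=eue$ and $f=efe$ lie in $eRe$, every product of $u$ or $f$ with $1-e$ (on either side) vanishes. Keeping careful track of which cross terms die by this orthogonality is the only thing one must watch.

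First I would set $v=u+(1-e)$. Writing $u^{-1}$ for the inverse of $u$ inside $eRe$ (so that $uu^{-1}=u^{-1}u=e$ and $u^{-1}=eu^{-1}e$), I claim $v^{-1}=u^{-1}+(1-e)$. Multiplying out $v\bigl(u^{-1}+(1-e)\bigr)$, the cross terms $u(1-e)$ and $(1-e)u^{-1}$ vanish by orthogonality, leaving $uu^{-1}+(1-e)^2=e+(1-e)=1$, and the reverse product is identical. Hence $v\in U(R)$. This step — observing that an element invertible only in the corner $eRe$ becomes invertible in all of $R$ once $1-e$ is adjoined — is the real content of the lemma; everything else is bookkeeping.

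Next I would set $\overline{e}=f+(1-e)$ and check $\overline{e}\in Idem(R)$ by expanding $\overline{e}^{\,2}=f^2+f(1-e)+(1-e)f+(1-e)^2$. The two middle terms vanish because $f\in eRe$, and $f^2=f$ together with $(1-e)^2=1-e$ gives $\overline{e}^{\,2}=f+(1-e)=\overline{e}$, as needed.

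Finally I would verify the decomposition directly: $v-\overline{e}=\bigl(u+(1-e)\bigr)-\bigl(f+(1-e)\bigr)=u-f=a$. Thus $a=v-\overline{e}$ with $v\in U(R)$ and $\overline{e}\in Idem(R)$, completing the argument. I anticipate no genuine obstacle beyond the care required to compute each inverse and each idempotency relation in the correct ring and to discard the vanishing cross terms.
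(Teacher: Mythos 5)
Your proposal is correct and is essentially identical to the paper's own proof: both take $v=u+(1-e)$ with inverse $u^{-1}+(1-e)$ (the paper calls it $w+(1-e)$) and $\overline{e}=f+(1-e)$, using orthogonality of $e$ and $1-e$ to kill the cross terms. Your write-up is in fact slightly cleaner, since the paper's justification ``$v(1-e)=0$'' is a typo for ``$u(1-e)=0$''.
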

\begin{proof}
  Since $u\in U(eRe)$, so there exists $w\in U(eRe)$ such that $uw=e=wu$. Assume $v=u+(1-e)$, then $v(w+(1-e))=1=(w+(1-e))v$, because $v(1-e)=0=(1-e)w$.
   Clearly $ a-v=a-u-(1-e)=-(f+(1-e))$. It is easy to see that $\overline{e}=f+(1-e)$ is idempotent and hence the result follows.
\end{proof}
For the sake of completeness of the article we prove a result on weakly clean element.
\begin{Prop}
  Let $R$ be an abelian ring and $a\in R$ is weakly clean element then $ae$ is weakly clean for any idempotent $e\in R$.
\end{Prop}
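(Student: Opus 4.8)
The plan is to reduce to the corner ring $eRe$ and then lift the decomposition back to $R$. Since $R$ is abelian every idempotent is central, so in particular $e\in C(R)$ and $eRe=eR=Re$ is a ring with identity $e$ in which multiplication by $e$ is well behaved. First I would write the hypothesis that $a$ is weakly clean as $a=u+f$ or $a=u-f$ with $u\in U(R)$ and $f\in Idem(R)$, and then multiply through by $e$. Using centrality of $e$ I would record the two routine facts that make the corner picture work: $ue\in U(eRe)$ with inverse $u^{-1}e$ (since $(ue)(u^{-1}e)=uu^{-1}e=e$ and likewise on the other side), and $fe\in Idem(eRe)$ (since $(fe)^2=f^2e^2=fe$ and $e(fe)e=fe$). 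Thus in either case $ae=ue\pm fe$ is a clean, respectively weakly clean, decomposition \emph{inside} the corner $eRe$.

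In the $+$ case, $ae=ue+fe$, I would construct the lift by hand. Set $v=ue-(1-e)$; this is a unit of $R$ with inverse $u^{-1}e-(1-e)$, as one checks from
\[
\bigl(ue-(1-e)\bigr)\bigl(u^{-1}e-(1-e)\bigr)=e+(1-e)=1,
\]
all cross terms vanishing because $e(1-e)=0$ by centrality. Then $ae-v=fe+(1-e)$, and since $fe$ and $1-e$ are orthogonal idempotents ($fe\cdot(1-e)=0=(1-e)\cdot fe$) their sum is again idempotent, so $ae=v+\bigl(fe+(1-e)\bigr)$ is in fact clean. In the $-$ case, $ae=ue-fe$ with $ue\in U(eRe)$ and $fe\in Idem(eRe)$, so I would simply invoke Lemma \ref{10} applied to the corner $eRe$ to obtain $v\in U(R)$ and $\overline{e}\in Idem(R)$ with $ae=v-\overline{e}$. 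In either case $ae$ is weakly clean, which is the claim.

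The main obstacle is precisely the passage from the corner ring to $R$: the element $ae$ is naturally (weakly) clean in $eRe$, but the unit $ue$ and idempotent $fe$ there are \emph{not} a unit and idempotent of $R$, so one must correct them by padding with $1-e$. Centrality of $e$ (hence the abelian hypothesis) is exactly what guarantees that the padding $1-e$ is orthogonal to $ue$ and $fe$ and commutes through the computations, so that the corrected element stays a unit and the corrected idempotent stays idempotent. Once the $+$ case lift is done explicitly as above and the $-$ case is delegated to Lemma \ref{10}, no further calculation is needed.
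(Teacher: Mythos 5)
Your proof is correct and follows essentially the same route as the paper's: split into the two cases, observe that $ue\in U(eRe)$ and $fe\in Idem(eRe)$ by centrality of idempotents, and in the minus case invoke Lemma \ref{10} to lift back to $R$. The only difference is that in the plus case you carry out the lift explicitly with $v=ue-(1-e)$ and the idempotent $fe+(1-e)$, whereas the paper simply cites Lemma 2.1 of \cite{2}; your explicit computation is a correct (and self-contained) substitute.
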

\begin{proof}
  Let $a\in R$ be a weakly clean element. So $a=u+e'$ or $a=u-e'$, where $u\in U(R)$ and $e'\in Idem(R)$\\
  Case $1$: If $a=u+e'$, then  $ae$ is clean for any idempotent $e$ by Lemma 2.1 \cite{2}. \\
  Case $2$: If $a=u-e'$, then $ae=ue-e'e$. But $ue=ue^2=eue\in U(eRe)$ and $e'e\in Idem(eRe)$, as $R$ is abelian. So by Lemma {\ref{10}},  $ae=v-\overline{e}$, for some $v\in U(R)$ and $\overline{e}\in Idem(R)$. Hence $ae$ is weakly clean.
\end{proof}
\begin{Lem}
  Let $R$ be an abelian ring such that  $a$ and $-a$ both are clean. Then the followings hold:
  \begin{enumerate}
    \item $a+e$ is clean for any idempotent $e$ of $R$.
    \item $a-e$ is weakly clean for any idempotent $e$ of $R$.
  \end{enumerate}
\end{Lem}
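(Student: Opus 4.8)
The plan is to use the idempotent $e$ to split $R$: since $R$ is abelian, $e$ is central, so $R = eRe \oplus (1-e)R(1-e)$ and every element decomposes into its $e$-corner and its $(1-e)$-corner. An element of $R$ is a unit (resp. an idempotent, resp. clean) precisely when both of its corner components are so in the respective corner rings $eRe$ and $(1-e)R(1-e)$. I would therefore treat $a+e$ and $a-e$ corner by corner and then reassemble. The decisive mechanism is that ``$a$ clean'' will control one corner while ``$-a$ clean'' controls the complementary corner, and that adding or subtracting $e$ turns an idempotent $f$ of a corner into its complement $e-f$, which is again idempotent there because $e$ is the identity of $eRe$.

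For (i), write $a+e = (ae+e) \oplus a(1-e)$. Since $a$ is clean, $a(1-e)$ is clean in $(1-e)R(1-e)$ by Lemma 2.1 \cite{2}. For the $e$-corner I would use that $-a$ is clean: then $-ae = u_0 + f_0$ with $u_0\in U(eRe)$ and $f_0\in Idem(eRe)$, whence $ae+e = (-u_0) + (e-f_0)$, a unit plus the idempotent $e-f_0$ of $eRe$. Reassembling the two corner decompositions produces $u\in U(R)$ and $f\in Idem(R)$ with $a+e = u+f$, so $a+e$ is clean.

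For (ii), write $a-e = (ae-e) \oplus a(1-e)$. Because $a$ is clean, $ae = u_1 + f_1$ in $eRe$, so $ae-e = u_1 - (e-f_1)$, a unit minus an idempotent of $eRe$. Because $-a$ is clean, $-a(1-e) = u_2 + f_2$ in $(1-e)R(1-e)$, so $a(1-e) = (-u_2) - f_2$, again a unit minus an idempotent. Both corners now sit in the same ``unit $-$ idempotent'' pattern, so reassembling yields $v\in U(R)$ and $g\in Idem(R)$ with $a-e = v-g$; in particular $a-e$ is weakly clean. (The lift from these corner representations to $R$ can alternatively be phrased through Lemma \ref{10}.)

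The main obstacle is precisely this uniform-sign issue in (ii): weakly clean elements need not decompose nicely across a direct product, so a priori the two corners could force opposite signs and block a global $v-g$ (or $v+g$) representation. What rescues the argument is that having both $a$ and $-a$ clean pushes both corners of $a-e$ into the same minus-pattern, supplying one consistent sign. I would take care to verify that each hypothesis is genuinely used --- $a$ clean for the $e$-corner of $a-e$ and the $(1-e)$-corner of $a+e$, and $-a$ clean for the complementary corners --- and that recombining units and idempotents across the two central corners is legitimate, which it is because $e$ is central.
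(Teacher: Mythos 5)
Your proof is correct and follows essentially the same route as the paper: the paper's computation $a-e=(a-1)e+a(1-e)=-\{(1-e')e+e''(1-e)\}+\{ue-u'(1-e)\}$ (with $a=u+e'$, $-a=u'+e''$) is exactly your corner-by-corner decomposition along the central idempotent $e$, with $a$ clean controlling the $e$-corner and $-a$ clean controlling the $(1-e)$-corner. The only cosmetic difference is that for part (i) the paper simply cites Lemma 2.1 of \cite{2}, whereas you run the same corner argument explicitly.
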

\begin{proof}
  $(i)$ It follows from Lemma 2.1 \cite{2} .\\
  $(ii)$ Consider $a=u+e'$ and $-a=u'+e''$ where, $u,u'\in U(R)$ and $e,e'\in Idem(R)$. Now
  \begin{align*}
    a-e &=(ae+a(1-e))-e \\
        &=(a-1)e+a(1-e) \\
        &=-(1-a)e+a(1-e) \\
        &=-(1-e'-u)e+(-u'-e'')(1-e) \\
        &=-\{(1-e')e+e''(1-e)\}+\{ue-u'(1-e)\}
  \end{align*}
Since $\{(1-e')e+e''(1-e)\} \in Idem(R)$ and $\{ue-u'(1-e)\}\in U(R)$, so $a-e$ is weakly clean.
\end{proof}
 Here we define a weakly $r$-clean (respectively weakly clean) element $x\in R$ is of type $1$ form if $x=r+e$, where $r\in Reg(R)$ (respectively $r\in U(R)$) and $e\in Idem(R)$ and of type $2$ form if $x=r-e$, where $r\in Reg(R)$ (respectively $r\in U(R)$) and $e\in Idem(R)$.
 \begin{Thm}
   Let $R$ be an abelian ring. Then $R$ is weakly clean if and only if $R$ is weakly $r$-clean.
 \end{Thm}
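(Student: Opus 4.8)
The plan is to prove the two implications separately, with the reverse implication essentially immediate and the forward implication carrying all the weight.

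First I would dispose of the easy direction. If $R$ is weakly clean and $x\in R$, write $x=u+e$ or $x=u-e$ with $u\in U(R)$ and $e\in Idem(R)$; since every unit is regular (indeed $u=uu^{-1}u$, so $U(R)\subseteq Reg(R)$), this is already a weakly $r$-clean representation. Hence weakly clean $\Rightarrow$ weakly $r$-clean, with no use of the abelian hypothesis. For the converse I would reduce everything to one claim: in an abelian ring every regular element is clean. Concretely, I claim that if $R$ is abelian and $r\in Reg(R)$ then both $r$ and $-r$ are clean. Granting this, take $x\in R$ arbitrary and use weak $r$-cleanness to write $x=r+e$ or $x=r-e$ with $r\in Reg(R)$, $e\in Idem(R)$. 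Setting $a=r$ in the preceding Lemma (whose hypothesis is exactly that $a$ and $-a$ are clean): part (i) gives that $x=r+e$ is clean, hence weakly clean, while part (ii) gives that $x=r-e$ is weakly clean. Either way $x$ is weakly clean, so $R$ is weakly clean.

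It remains to establish the claim, and this is where the real work lies. Given $r\in Reg(R)$ choose $y$ with $r=ryr$ and set $e_0=ry$, $f_0=yr$. A direct computation gives $e_0^2=e_0$ and $f_0^2=f_0$, and abelianness makes both central; one also checks immediately that $e_0 r=ryr=r$ and $rf_0=ryr=r$, whence $e_0 r=re_0=r$ and $f_0 r=rf_0=r$ by centrality. I would then deduce $e_0=f_0$ from $e_0=ry=(f_0 r)y=f_0 e_0$ together with $f_0=yr=y(re_0)=f_0 e_0$. Writing $e_0$ for this common central idempotent, $r$ lies in the corner $e_0Re_0=e_0R$ (since $e_0 r e_0=r$) and is a unit there with inverse $e_0 y e_0$. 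Finally I would lift this to $R$: in the decomposition $R\cong e_0R\times(1-e_0)R$ the element $v=r-(1-e_0)$ has a unit in each factor, with inverse $e_0 y e_0-(1-e_0)$, so $v\in U(R)$, and $r=v+(1-e_0)$ exhibits $r$ as unit plus idempotent. Since $-r$ is also regular (via $-r=(-r)(-y)(-r)$), the identical argument shows $-r$ is clean, completing the claim.

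The main obstacle is this claim about regular elements, and within it the genuinely ring-theoretic point that in an abelian ring the two idempotents $ry$ and $yr$ attached to a generalized inverse coincide and are central. This collapses the Peirce decomposition to the central product $e_0R\times(1-e_0)R$, in which $r$ is visibly a unit on one factor and $0$ on the other, so that adding the complementary identity $1-e_0$ produces a genuine unit of $R$. Once the claim is in hand the theorem follows formally from the two parts of the previous Lemma, the only bookkeeping being to match the $\pm$ sign of the weakly $r$-clean representation of $x$ to parts (i) and (ii); no further case analysis or estimation is required.
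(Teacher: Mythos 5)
Your proposal is correct and follows essentially the same route as the paper: both directions reduce to showing that a regular element $r$ of an abelian ring has $r$ and $-r$ clean (via the central idempotent $e_0=ry=yr$ and the unit $r\pm(1-e_0)$), and then invoking the preceding Lemma on $a\pm e$. Your version is slightly more careful in explicitly verifying $ry=yr$ and in checking that \emph{both} $r$ and $-r$ are clean before applying part (i) of that Lemma, but the underlying argument is the same.
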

 \begin{proof}
   $(\Rightarrow)$ Obviously weakly clean rings are weakly $r$-clean.\\
   $(\Leftarrow)$ Let $R$ be weakly $r$-clean ring and $x\in R$. So $x=r\pm e'$, where $r=ryr$ for some $y\in R$ and $e'\in Idem(R)$.\\
  First let $x=r+e'$. Assume $yr=e$, then clearly $e$ is an idempotent and therefore \\ $(ye+(1-e))(re+(1-e))=1$. Let $u=re+(1-e)$, then $u\in U(R)$ and $ue=r$, so $ue+f \in U(R)$, where $(1-e)=f$. Thus $ -r=-(ue+f)+f$ is clean. Hence by  lemma 2.7, $r+e'$ is clean.\\
   If $x=r-e'$, then clearly $r$ and $-r$ both are clean and so by lemma 2.7, $r-e'$ is weakly clean. Hence $R$ is weakly clean.
 \end{proof}

Consider the ring $\mathbb{Z}_{(6)}$  of all rational numbers $\frac{a}{b}$ of lowest form, where $b$ is relatively prime to $6$. Since $\frac{21}{11}+1=\frac{32}{11}$ and $\frac{21}{11}-1=\frac{10}{11}$ both are not units and 0, 1 are the only idempotents of $\mathbb{Z}_{(6)}$ so $\mathbb{Z}_{(6)}$ is not weakly clean and hence  not weakly $r$-clean. The following is an example of weakly $r$-clean but not $r$-clean ring.
\begin{example}
  $\mathbb{Z}_{(3)}\cap \mathbb{Z}_{(5)}$ is weakly $r$-clean but not $r$-clean.

   Since $\mathbb{Z}_{(3)}\cap \mathbb{Z}_{(5)}$ is weakly clean but not clean \cite{1} and if $R$ is an abelian ring then $R$ is  clean if and only if $R$ is  $r$-clean by   Theorem 2.2 \cite{2}. So by Theorem 2.8, $\mathbb{Z}_{(3)}\cap \mathbb{Z}_{(5)}$ is weakly $r$-clean but not $r$-clean.
\end{example}
 \begin{Def}
   A ring $R$ is called weakly exchange ring if for any $x\in R$, there exists an idempotent $e\in xR$ such that  $1-e \in (1-x)R$ or $1-e \in (1+x)R$.
 \end{Def}
 The relation between weakly $r$-clean ring and weakly exchange ring is given below.
 \begin{Thm}
   If $R$ is abelian ring then $R$ is weakly $r$-clean ring \textit{if and only if} $R$ is weakly exchange ring.
 \end{Thm}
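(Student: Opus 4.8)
The plan is to leverage Theorem~2.8, which already identifies weak $r$-cleanness with weak cleanness for abelian rings. Once I invoke it, the statement reduces to proving that an abelian ring $R$ is weakly clean if and only if it is weakly exchange, and this is the only genuinely new content. The whole argument will be powered by one structural observation: since $R$ is abelian every idempotent $f$ is central, so $R$ splits as a product of corner rings $fR \times (1-f)R$ (each a ring with identity $f$, respectively $1-f$), and a central idempotent in $xR$ together with a membership in $(1\mp x)R$ is exactly the data needed to recognize units in these corners.

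First I would treat the direction weakly clean $\Rightarrow$ weakly exchange. Given $x \in R$, weak cleanness supplies $x = u + f$ or $x = u - f$ with $u \in U(R)$ and $f \in Idem(R)$. In either case I would set $e = 1 - f$ and check the two required memberships directly: restricting to the corner $(1-f)R$ one finds $x(1-f) = u(1-f)$, a unit of that corner, which yields $e = 1-f \in xR$; and restricting to $fR$ one computes $(1-x)f = -uf$ when $x = u+f$ and $(1+x)f = uf$ when $x = u-f$, again units of $fR$, which yield $1 - e = f \in (1-x)R$ in the first case and $1 - e = f \in (1+x)R$ in the second. Thus the type-$1$ clean form feeds the first alternative of Definition~2.10 and the type-$2$ form feeds the second.

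For the converse weakly exchange $\Rightarrow$ weakly clean, I would start from an idempotent $e \in xR$, say $e = xt$, with $1 - e \in (1-x)R$ or $1 - e \in (1+x)R$. Using centrality of $e$ I would pass to the decomposition $R \cong eR \times (1-e)R$. The relation $e = xt$ shows that $x$ restricts to a unit of the corner $eR$, while the membership of $1-e$ shows that $(1-x)(1-e)$, respectively $(1+x)(1-e)$, is a unit of $(1-e)R$. Writing $x = xe + x(1-e)$ and reading the second summand as $(1-e) - (\text{unit of }(1-e)R)$ or as $(\text{unit of }(1-e)R) - (1-e)$ according to the case, I would assemble a global unit and the idempotent $1-e$ to obtain $x = u' + (1-e)$ or $x = u' - (1-e)$; the sign is dictated by which alternative of the weakly exchange condition held.

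I expect the main difficulty to be bookkeeping rather than conceptual: one must route the two alternatives in the definition of a weakly exchange ring consistently to the two sign choices in the definition of a weakly clean element, and verify that the abelian hypothesis is used precisely where the classical clean $\Leftrightarrow$ exchange correspondence needs central idempotents to form the corner product. If the equivalence of weak cleanness and weak exchange for abelian rings is already recorded in the weakly clean literature such as \cite{1}, the proof shortens to citing that fact together with Theorem~2.8.
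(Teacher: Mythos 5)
Your proposal is correct and follows the same overall skeleton as the paper: reduce to ``weakly clean $\Leftrightarrow$ weakly exchange'' via Theorem 2.8, route the type-$1$ decomposition to the $(1-x)R$ alternative and the type-$2$ decomposition to the $(1+x)R$ alternative, and in the converse build a unit of the form $x\pm(1-e)$. The forward direction is where you genuinely diverge: the paper conjugates the idempotent, sets $f=u^{-1}(1-e)u$, verifies $u(x+f)=x^2+x$, and then cites Lemma 2.1 of \cite{5} to recognize the weakly exchange property --- an argument that does not use the abelian hypothesis at all --- whereas you verify the definitional memberships $1-f\in xR$ and $f\in(1\mp x)R$ directly from $x(1-f)=u(1-f)$ and $(1\mp x)f=\mp uf$, using centrality of $f$. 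Your version is more self-contained (no appeal to \cite{5}) at the cost of invoking the abelian hypothesis in a direction where it is not needed; both are valid. In the converse your argument is essentially Nicholson's Proposition 1.8 adapted to the two sign cases, exactly as in the paper, but one step deserves to be made explicit: from $e=xt$ you only get that $xe$ has a \emph{right} inverse $te$ in the corner $eR$ (and similarly $(1\mp x)(1-e)$ has a one-sided inverse in $(1-e)R$), so ``$x$ restricts to a unit of the corner'' requires either the observation that abelian rings are Dedekind-finite (central idempotent $wv$ plus $vw=e$ forces $wv=e$), or the paper's route of exhibiting the two-sided inverse $a+b$ of $x+(1-e)$ explicitly after normalizing $a=ea$, $b=(1-e)b$. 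With that justification supplied, your proof is complete.
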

 \begin{proof}
   $(\Rightarrow)$ It is equivalent to show $R$ is weakly clean ring \textit{if and only if} $R$ is weakly exchange ring.\\ Let $x\in R$, if $x=u+e$ where $u\in U(R)$ and $e\in Id(R)$ then clearly it satisfies the exchange property  \cite{8}. \\
   Suppose, $x=u-e$ where $u\in U(R)$ and $e\in Idem(R)$. Let $f=u^{-1}(1-e)u$ then $f^2=f$. Now\begin{align*}
                                                                                              u(x+f) &=u(u-e+u^{-1}(1-e)u) \\
                                                                                                     &=(u-e)^2+(u-e)\\
                                                                                                     &=x^2+x
                                                                                            \end{align*}
$\therefore \,\,\, x+f\in R(x^2+x)$. So $x$ satisfies weakly exchange property by Lemma 2.1 \cite{5}. Hence $R$ is weakly exchange ring.
 $(\Leftarrow)$ Suppose $x\in R$ then there exists an idempotent $e\in Rx$ such that  $1-e \in R(1-x)$ or $1-e \in R(1+x)$.\\
 Case $1$: Suppose $1-e \in R(1+x)$ and $e=ax$, $1-e=b(1+x)$, for some $a,b\in R$. Assume that $ea=a$ and $(1-e)b=b$ so that $axa=ea=a$ and $b(1+x)b=b$. Here $ax,\,xa,\,b(1+x),\,(1+x)b$ all are central idempotent and $ax=(ax)(ax)=(ax)(xa)=x(ax)a=xa$ similarly $(1+x)b=b(1+x)$. Now $(a+b)(x+(1-e))=ax+bx+a(1-e)+b(1-e)=1$ so $x+(1-e)$ is unit. Hence $x$ is weakly clean element.\\
 Case $2$: If $1-e\in R(1-x)$ then clearly $x$ is weakly clean by Proposition 1.8 \cite{8}.
 \end{proof}
 A polynomial ring over a commutative ring $R$ can never be weakly $r$-clean as $x$ can not be represented in weakly clean decomposition. N. Ashrafi and E. Nasibi \cite{2}, showed that $R$ is $r$-clean \textit{if and only if} $R[[x;\alpha]]$ is $r$-clean, where $R[[x;\alpha]]$ is the ring of skew formal power series over $R$ \textit{i.e.} all formal power series in $x$ with coefficients from $R$ and $\alpha$ is an ring endomorphism. Multiplication is defined by $xr=\alpha (r)x$, for all $r\in R$. In particular $R[[x]]=R[[x;I_R]]$ is the ring of formal power series over $R$.
 \begin{Prop}
   Let $R$ be an abelian ring and $\alpha$ be an endomorphism of $R$. Then the following are equivalent.
   \begin{enumerate}
     \item $R$ is weakly $r$-clean ring.
     \item The formal power series ring $R[[x]]$ over $R$ is weakly $r$-clean.
     \item The skew power series ring $R[[x;\alpha]]$ over $R$ is weakly $r$-clean.
   \end{enumerate}
 \end{Prop}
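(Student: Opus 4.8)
The plan is to prove the two-way implication $(1)\Leftrightarrow(3)$ for an arbitrary endomorphism $\alpha$; since $R[[x]]=R[[x;I_R]]$, statement $(2)$ is the special case $\alpha=I_R$, so $(1)\Leftrightarrow(2)$ follows at once and all three become equivalent. Throughout I would use two standard structural facts about the skew power series ring $S=R[[x;\alpha]]$. First, an element $f=\sum_{i\ge 0}a_ix^i$ is a unit of $S$ if and only if its constant term $a_0$ is a unit of $R$: if $a_0\in U(R)$ one factors $f=a_0\big(1+\sum_{i\ge 1}(a_0^{-1}a_i)x^i\big)$ and inverts the second factor by the $x$-adically convergent geometric series, while the converse follows by reducing modulo $(x)$. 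Second, every idempotent of $R$, viewed as a constant series, is an idempotent of $S$, so $Idem(R)\subseteq Idem(S)$, and adding or subtracting such a constant to $f$ alters only its constant term.

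For the easy direction $(3)\Rightarrow(1)$ I would use the augmentation map $\varphi:S\to R$, $\sum a_ix^i\mapsto a_0$. A quick check of constant terms shows $\varphi$ is a surjective ring homomorphism (the twist $\alpha$ only affects positive powers of $x$), so $R$ is a homomorphic image of $S$; if $S$ is weakly $r$-clean then so is $R$ by Theorem 2.2. This step needs no hypothesis on idempotents.

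The substance is $(1)\Rightarrow(3)$, and here the abelian hypothesis enters. The naive attempt — take a weakly $r$-clean decomposition $a_0=r_0\pm e_0$ of the constant term and try $f=(f\mp e_0)\pm e_0$, hoping the series $f\mp e_0$, whose constant term is the regular element $r_0$, is again regular in $S$ — fails: a power series whose constant term is a non-unit regular element (say $r_0=0$) need not be regular in $S$, as one sees already when $R$ is a field, where $Reg(S)=U(S)\cup\{0\}$. The remedy is to upgrade to a unit decomposition before lifting. Since $R$ is abelian and weakly $r$-clean, Theorem 2.8 makes $R$ weakly clean, so the constant term can be written $a_0=u_0+e_0$ or $a_0=u_0-e_0$ with $u_0\in U(R)$ and $e_0\in Idem(R)$. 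Subtracting (resp.\ adding) the constant idempotent $e_0$ leaves $f\mp e_0=u_0+\sum_{i\ge 1}a_ix^i$, whose constant term $u_0$ is a unit; by the first structural fact $f\mp e_0\in U(S)$. Hence $f=v+e_0$ or $f=v-e_0$ with $v\in U(S)$ and $e_0\in Idem(S)$, so $S$ is weakly clean; as every unit is regular, $S$ is weakly $r$-clean. Note that I obtain weak cleanness of $S$ directly and then pass to weak $r$-cleanness using only that units are regular, so I never need $S$ itself to be abelian.

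The main obstacle is precisely the observation that forces this detour: regularity of the constant term does not lift to regularity in the power series ring, whereas invertibility does. Routing the argument through Theorem 2.8 (which is where abelianness is consumed) converts the given weakly $r$-clean data into weakly clean data, after which the lift is immediate because units of $S$ are detected by the constant term. The only routine points left to verify are the unit criterion and the inclusion $Idem(R)\subseteq Idem(S)$ recorded above, together with the remark that $(2)$ is the instance $\alpha=I_R$.
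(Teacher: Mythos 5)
Your proposal is correct and follows essentially the same route as the paper, which simply cites Theorem 2.8 together with ``similar results of weakly clean rings'': you reduce to the weakly clean case via Theorem 2.8 and then lift the unit--idempotent decomposition to $R[[x;\alpha]]$ using the constant-term criterion for units. The only difference is that you write out in full the weakly clean power series argument that the paper leaves to the reader.
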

 \begin{proof}
              Follows from Theorem 2.8, and similar results of weakly clean rings.
               \end{proof}
Consider a ring $R[D,C]$ constructed in \cite{10}, for any subring $C$ of a ring $D$. The following result gives the relation between $R[D,C]$ with $D$ and $C$, about weakly $r$-clean ness.
\begin{Prop}
  Let $C$ be a subring of a ring $D$ then $R[D,C]$ is weakly $r$-clean ring \textit{if and only if} $D$ is $r$-clean and $C$ is weakly $r$-clean ring.
\end{Prop}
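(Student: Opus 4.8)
The plan is to realize $R[D,C]$ as the ring of eventually constant sequences $(d_1,\dots,d_n,c,c,\dots)$ with $d_i\in D$ and constant tail $c\in C$, sitting inside the full product $\underset{i\geq 1}{\prod}D$. The first thing I would record is the structural fact that an element of $R[D,C]$ is a unit (respectively an idempotent, respectively regular) exactly when each coordinate is, subject to the single caveat that a von Neumann inverse must itself be eventually constant with tail in $C$; likewise a weakly $r$-clean decomposition of a sequence is assembled from coordinatewise decompositions sharing one global sign. Viewed this way, $R[D,C]$ behaves like an infinite product of copies of $D$ together with one $C$-factor, so the whole argument runs parallel to the direct product result (Theorem 2.3).

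For the forward implication, suppose $R[D,C]$ is weakly $r$-clean. The tail projection $(d_1,\dots,d_n,c,c,\dots)\mapsto c$ and each coordinate projection onto $D$ are surjective ring homomorphisms, so by the homomorphic image theorem (Theorem 2.2) both $C$ and $D$ are weakly $r$-clean; it remains to upgrade $D$ to $r$-clean. Here I would exploit that $D$ occupies infinitely many coordinates. If $D$ were weakly $r$-clean but not $r$-clean, then exactly as in the proof of Theorem 2.3 there is an element $d\in D$ admitting only a type $2$ form $d=r-e$, and then $-d$ admits only a type $1$ form (using that $Reg(D)$ is closed under negation, since $-r=(-r)(-y)(-r)$ whenever $r=ryr$). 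Placing $d$ in the first coordinate and $-d$ in the second, with every remaining coordinate equal to $0$ so that the tail $0\in C$, yields an element of $R[D,C]$ that can be neither of global type $1$ (the first coordinate forbids it) nor of global type $2$ (the second coordinate forbids it), contradicting weak $r$-cleanness. Hence $D$ is $r$-clean.

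For the converse, assume $D$ is $r$-clean and $C$ is weakly $r$-clean, and take $x=(d_1,\dots,d_n,c,c,\dots)$. I would first decompose the tail using weak $r$-cleanness of $C$, writing $c=s+p$ or $c=s-p$ with $s\in Reg(C)$ and $p\in Idem(C)$; this fixes the global sign. The enabling observation is that in any $r$-clean ring every element possesses both a type $1$ and a type $2$ form: indeed $x=r+e$ directly, while applying $r$-cleanness to $x+1=r'+e'$ gives $x=r'-(1-e')$ with $1-e'$ idempotent. Consequently each head coordinate $d_i\in D$ can be expressed with whichever sign the tail dictates, and assembling these coordinate decompositions with the tail decomposition produces a presentation $x=r\pm e$.

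The step needing the most care, and the reason the two hypotheses are asymmetric, is verifying that the assembled $r$ is genuinely regular \emph{in} $R[D,C]$ and not merely coordinatewise regular in the ambient product. Its von Neumann inverse is $(y_1,\dots,y_n,z,z,\dots)$, where $d_i=d_iy_id_i$ in $D$ and $s=szs$; for this sequence to belong to $R[D,C]$ its tail $z$ must lie in $C$, which is exactly what $s\in Reg(C)$ guarantees. This is precisely why $C$ is required to be weakly $r$-clean as a ring in its own right rather than only through its embedding in $D$, and symmetrically why the infinitude of the $D$-coordinates forces $D$ to be fully $r$-clean; once this regularity check is in place, the remaining verifications that $e$ is idempotent and that $r,e\in R[D,C]$ are routine.
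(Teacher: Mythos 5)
Your proposal is correct and follows essentially the same route as the paper: the forward direction is the two-coordinate obstruction argument of Theorem 2.3 (which the paper simply cites, via the surjection onto $D\times D$, where you inline it), and the converse is the same coordinatewise assembly driven by the sign of the tail decomposition in $C$. You additionally make explicit two points the paper leaves implicit — that in an $r$-clean ring every element also has a type $2$ form via $x+1=r'+e'$, and that the assembled von Neumann inverse is itself eventually constant with tail in $C$ — which is a welcome tightening rather than a different method.
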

\begin{proof}
  $(\Rightarrow)$ As $R[D,C]=D\otimes D$ so by Theorem $2.3$ $D$ is $r$-clean. Clearly $C$ is weakly $r$-clean ring by Theorem $2.2$.\\
$(\Leftarrow)$ Let $D$ be $r$-clean and $C$ is weakly $r$-clean ring. Let $x=(a_1,a_2,\cdot\cdot\cdot, a_n, c, c, c, \cdot\cdot\cdot)\in R[D,C]$. Since $C$ is weakly $r$-clean ring, so $c=r\pm e$, where $r\in Reg(R)$ and $e\in Idem(R)$. If $c=r+e$, write $a_i=r_i+e_i$ then $x=\overline{r}+\overline{e}$, where $\overline{r}=(r_1, r_2, \cdot\cdot\cdot , r_n, r, r, \cdot\cdot\cdot)\in Reg(R[D,C])$ and $\overline{e}=(e_1, e_2, \cdot\cdot\cdot , e_n, e, e, \cdot\cdot\cdot)\in Idem(R[D,C])$. If $c=r-e$, write $a_i=r_i-e_i$ then $x=\overline{r}-\overline{e}$, where $\overline{r}=(r_1, r_2, \cdot\cdot\cdot , r_n, r, r, \cdot\cdot\cdot)\in Reg(R[D,C])$ and $\overline{e}=(e_1, e_2, \cdot\cdot\cdot , e_n, e, e, \cdot\cdot\cdot)\in Idem(R[D,C])$.
\end{proof}
Clearly every abelian semi regular  ring is weakly clean and hence  weakly $r$-clean. However the converse is not true as shown by the following example.
\begin{example}
  Let $\mathbb{Q}$ be a field of rational numbers and $L$ the ring of all rational numbers with odd denominators. 
  Then by example 2.7 \cite{2} and Theorem $2.8$, $R[\mathbb{Q},L]$ is commutative exchange ring and hence $R[\mathbb{Q},L]$ is weakly $r$-clean ring but not semi regular.
\end{example}
If $R$ is weakly $r$-clean ring then $R/I$ is weakly $r$-clean being homomorphic image of weakly $r$-clean ring. The theorem is a partial converse of this statement.
\begin{Thm}
  Let $I$ be a regular ideal of a ring $R$, where idempotents can be lifted modulo $I$. Then $R$ is weakly $r$-clean \textit{if and only if} $R/I$ is weakly $r$-clean.
\end{Thm}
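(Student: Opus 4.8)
The plan is to treat the two directions separately. The forward implication is immediate: $R/I$ is a homomorphic image of $R$ under the canonical projection, so by Theorem 2.2 it is weakly $r$-clean whenever $R$ is. The entire content therefore lies in the converse, and I would reduce it to a single auxiliary fact about how a regular ideal transports regularity, namely that \emph{if $I$ is a regular ideal of $R$ and $a\in R$ satisfies $\bar a=a+I\in Reg(R/I)$, then $a\in Reg(R)$}. Granting this, the converse follows by a routine lifting argument.

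To prove the auxiliary fact, suppose $\bar a$ is regular in $R/I$, so there is $b\in R$ with $a-aba\in I$. Put $c=a-aba=a(1-ba)=(1-ab)a$; then $c\in I$, and since $I$ is a regular ideal there is $d\in R$ with $c=cdc$. I would then produce an explicit inner inverse of $a$ by setting
\[
h=b+(1-ba)\,d\,(1-ab).
\]
The verification is a one-line computation: using $a(1-ba)=c=(1-ab)a$ one gets $a(1-ba)d(1-ab)a=cdc=c$, whence
\[
aha=aba+cdc=(a-c)+c=a,
\]
so $a=aha$ is regular in $R$. This is the step I expect to be the crux, since everything else is bookkeeping; the difficulty is guessing the corrective term $(1-ba)\,d\,(1-ab)$, which is engineered precisely so that the two one-sided factorisations of $c$ collapse the middle into $cdc$.

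With the auxiliary fact in hand I would finish the converse as follows. Take any $x\in R$. Since $R/I$ is weakly $r$-clean, $\bar x=\bar r+\bar e$ or $\bar x=\bar r-\bar e$ for some $\bar r\in Reg(R/I)$ and $\bar e\in Idem(R/I)$. Because idempotents lift modulo $I$, I may choose an idempotent $e\in Idem(R)$ with $e+I=\bar e$. In the first case $\overline{x-e}=\bar r$ is regular in $R/I$, so by the auxiliary fact $x-e\in Reg(R)$, and $x=(x-e)+e$ exhibits $x$ as weakly $r$-clean of type $1$. In the second case $\overline{x+e}=\bar r$ is regular, so $x+e\in Reg(R)$ and $x=(x+e)-e$ is a weakly $r$-clean decomposition of type $2$. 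As $x$ was arbitrary, $R$ is weakly $r$-clean, completing the proof.
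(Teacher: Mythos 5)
Your proposal is correct and follows the same strategy as the paper: the forward direction is the homomorphic-image observation, and the converse lifts the idempotent and transfers regularity from $R/I$ back to $R$ using the regularity of the ideal. The only difference is one of self-containment. Where you prove the auxiliary fact from scratch via the explicit inner inverse $h=b+(1-ba)d(1-ab)$, the paper simply cites Lemma 1 of Brown and McCoy for the implication ``$(a+e)x(a+e)-(a+e)\in I$ and $I$ regular imply $a+e\in Reg(R)$''; your computation $aha=aba+cdc=a$ is precisely the content of that cited lemma, so you have supplied the proof the paper outsources. Likewise, the paper handles only the type~$2$ decomposition explicitly, deferring the type~$1$ case to Theorem 2.8 of Ashrafi--Nasibi on $r$-clean rings, whereas you treat both cases uniformly with the same two lines. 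Nothing in your argument is missing or incorrect; if anything it is the more complete write-up of the two.
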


\begin{proof}
  Following  theorem 2.8 \cite{2}, it is enough to show that for any $a\in R$ if $a+I$ has type $2$ weakly $r$-clean decomposition in $R/I$ then $a$ is weakly $r$-clean in $R$. Let $a+I \in R/I$ has type $2$ weakly $r$-clean decompositions in $R/I$ then there exists idempotent $e+I \in R/I$ such that $(a+I)+(e+I)\in Reg(R/I)$ \textit{i.e.} $(a+e)+I\in Reg(R/I)$. Therefore $((a+e)+I)(x+I)((a+e)+I)=(a+e)+I$, for some $x\in R$, so $(a+e)x(a+e)-(a+e)\in I$ but $I$ is regular ideal, Hence $(a+e)$ is regular by Lemma 1, \cite{4} . Since idempotents can be lifted modulo $I$, we may assume that $e$ is an idempotent of $R$.
\end{proof}
\begin{Prop}
  Let $M$ be a $A-B$ bi-module. If $T=\left(
                                      \begin{array}{cc}
                                        A & 0 \\
                                        M & B \\
                                      \end{array}
                                    \right)
  $, a formal triangular matrix ring is weakly $r$-clean then $A$ and $B$ are weakly $r$-clean ring.
\end{Prop}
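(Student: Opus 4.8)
The plan is to realise $A$ and $B$ as homomorphic images of $T$ and then invoke Theorem 2.2, which tells us that a homomorphic image of a weakly $r$-clean ring is again weakly $r$-clean. In this way the whole proposition reduces to producing two surjective ring homomorphisms $\pi_A:T\to A$ and $\pi_B:T\to B$, after which the conclusion is immediate.

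First I would take $\pi_A$ and $\pi_B$ to be the projections onto the two diagonal entries, namely $\pi_A\left(\begin{smallmatrix} a & 0 \\ m & b\end{smallmatrix}\right)=a$ and $\pi_B\left(\begin{smallmatrix} a & 0 \\ m & b\end{smallmatrix}\right)=b$. Both maps are visibly additive and surjective, since $a$ is hit by $\left(\begin{smallmatrix} a & 0 \\ 0 & 0\end{smallmatrix}\right)$ and $b$ by $\left(\begin{smallmatrix} 0 & 0 \\ 0 & b\end{smallmatrix}\right)$, and they send the identity $\left(\begin{smallmatrix} 1_A & 0 \\ 0 & 1_B\end{smallmatrix}\right)$ of $T$ to $1_A$ and $1_B$ respectively.

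The one thing to verify is multiplicativity, and for this I would record the product rule in $T$:
\[
\begin{pmatrix} a & 0 \\ m & b \end{pmatrix}\begin{pmatrix} a' & 0 \\ m' & b' \end{pmatrix}=\begin{pmatrix} aa' & 0 \\ ma'+bm' & bb' \end{pmatrix}.
\]
The point is that the $(1,1)$ entry of a product depends only on the $(1,1)$ entries of the factors, and the $(2,2)$ entry only on the $(2,2)$ entries; the bimodule term $ma'+bm'$ lives entirely off the diagonal and never contaminates the diagonal slots. Hence $\pi_A$ and $\pi_B$ are ring homomorphisms, so $A=\pi_A(T)$ and $B=\pi_B(T)$ are homomorphic images of $T$, and Theorem 2.2 finishes the argument.

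I do not expect a genuine obstacle here. The only subtlety is purely conceptual: one must be sure that both building blocks of a weakly $r$-clean decomposition survive a ring homomorphism, i.e. that $r=ryr$ forces $\pi(r)=\pi(r)\pi(y)\pi(r)$ and that idempotents map to idempotents — but this is precisely what is already packaged into Theorem 2.2. I would also remark that only this direction is asserted: a weakly $r$-clean decomposition of a diagonal pair $(a,b)$ need not extend across the bimodule $M$ to a decomposition in $T$, so obtaining the reverse implication would require additional hypotheses and is not claimed.
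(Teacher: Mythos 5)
Your proof is correct, but it takes a different route from the paper. The paper argues element by element: given $t\in T$ with a weakly $r$-clean decomposition, it splits into two cases according to whether the decomposition is of type $1$ (handled by citing Theorem 16 of \cite{3}) or type $2$ (handled by a direct computation showing that the diagonal entries of the idempotent matrix are idempotents of $A$ and $B$ and the diagonal entries of the regular matrix are regular in $A$ and $B$). You instead observe that the two diagonal projections $\pi_A$ and $\pi_B$ are surjective ring homomorphisms --- the bimodule entry never enters the diagonal of a product --- and then quote Theorem 2.2. This is the cleaner argument: it eliminates the case split entirely, and the entry-wise verifications the paper performs (idempotents project to idempotents, regular elements project to regular elements) are exactly what Theorem 2.2 already packages for an arbitrary surjection. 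Your closing remark that the converse is not claimed, because a decomposition of the diagonal pair need not extend across $M$, is also accurate and consistent with the paper, which asserts only the one direction.
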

\begin{proof}
  Let $a\in A$ and $b\in B$ and $m\in M$, consider $t=\left(
                                                            \begin{array}{cc}
                                                              a & 0 \\
                                                              m & b \\
                                                            \end{array}
                                                          \right) \in T
  $. \\
  Case $I$: If $t$ is $r$-clean then clearly $a$ and $b$ both are $r$-clean in their respective rings $A$ and $B$ by Theorem 16, \cite{3} .\\
  Case $II$: If $t=-\left(
                     \begin{array}{cc}
                       f_1 & 0 \\
                       f_2 & f_3 \\
                     \end{array}
                   \right)+\left(
                     \begin{array}{cc}
                       r_1 & 0 \\
                       r_2 & r_3 \\
                     \end{array}
                   \right)
  $ where $\left(
                     \begin{array}{cc}
                       f_1 & 0 \\
                       f_2 & f_3 \\
                     \end{array}
                   \right)^2=\left(
                     \begin{array}{cc}
                       f_1 & 0 \\
                       f_2 & f_3 \\
                     \end{array}
                   \right)$ and $\left(
                     \begin{array}{cc}
                       r_1 & 0 \\
                       r_2 & r_3 \\
                     \end{array}
                   \right)\in Reg(T)$. It is simple calculation to show that $f_1$ and $f_3$ are idempotents in $A$ and $B$ respectively and $r_1$ and $r_3$ are regular elements in $A$ and $B$ respectively. Finally $a=-f_1+r_1$ and $b=-f_3+r_3$. Hence $A$ and $B$ both are weakly $r$-clean rings.
\end{proof}
In \cite{10}, T. Kosan, S. Sahinkaya and Y. Zhou showed that  the center of weakly clean rings need not be weakly clean. In that example $End(_{R}E(R))$ is weakly clean whereas the center is not weakly clean as $R$ is not weakly clean. But since $R$ is abelian in that example so by Theorem 2.8, the center of $End(_{R}E(R))$ is not weakly $r$-clean. Hence the center of weakly $r$-clean ring need not be weakly $r$-clean. 
\begin{Thm}
  Let $R$ be a weakly $r$-clean ring with no nontrivial idempotents. Then the center of $R$ is also a weakly $r$-clean ring.
\end{Thm}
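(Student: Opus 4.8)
The plan is to exploit the hypothesis ``no nontrivial idempotents'' to pin down exactly what the regular elements of $R$ look like. The key reduction I would establish first is: in a ring $R$ whose only idempotents are $0$ and $1$, one has $Reg(R)=\{0\}\cup U(R)$. Indeed, if $a=aba$ with $a\neq 0$, then $ab$ and $ba$ are idempotents, since $(ab)^2=(aba)b=ab$ and similarly $(ba)^2=ba$, so each lies in $\{0,1\}$; neither can be $0$ (otherwise $a=aba=0$), forcing $ab=ba=1$, that is, $a\in U(R)$. This is the analogue, for idempotent-free rings, of the unit--regular collapse used in Lemma~2.4.

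Next I would observe that the center inherits the hypothesis. Since $0,1\in C(R)$ and every idempotent of $C(R)$ is in particular an idempotent of $R$, the ring $C(R)$ also has no nontrivial idempotents, so by the same reduction $Reg(C(R))=\{0\}\cup U(C(R))$. I also need that a central unit of $R$ is a unit of $C(R)$: if $u\in C(R)\cap U(R)$, then for every $a\in R$ the identity $u^{-1}a=u^{-1}(au)u^{-1}=u^{-1}(ua)u^{-1}=au^{-1}$ shows $u^{-1}\in C(R)$, whence $u\in U(C(R))$. Thus $C(R)\cap U(R)\subseteq U(C(R))$.

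Finally, take any $x\in C(R)$. Because $R$ is weakly $r$-clean and its only idempotents are $0$ and $1$, we may write $x=r+e$ or $x=r-e$ with $r\in Reg(R)=\{0\}\cup U(R)$ and $e\in\{0,1\}\subseteq C(R)$. Then $r=x\mp e$ is a sum of central elements, so $r\in C(R)$; combined with $r\in\{0\}\cup U(R)$ and the previous paragraph, this gives $r\in\{0\}\cup U(C(R))=Reg(C(R))$. Since $e\in Idem(C(R))$, the equation $x=r\pm e$ is itself a weakly $r$-clean decomposition carried out entirely inside $C(R)$, and therefore $C(R)$ is a weakly $r$-clean ring.

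The only genuine obstacle is the first step: recognizing that the idempotent-free hypothesis forces every regular element to be a unit or zero. Once that collapse is in hand, the centrality of the regular part $r=x\mp e$ is automatic and the transfer of regularity down to $C(R)$ is routine; no delicate control over which elements are regular in $C(R)$ versus $R$ is needed, because regularity has been reduced to invertibility.
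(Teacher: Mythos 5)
Your proposal is correct, and its skeleton matches the paper's proof: both decompose a central element $x$ as $x=r\pm e$ with $e\in\{0,1\}$ (the only idempotents available) and observe that $r=x\mp e$ is then central. The difference is that you supply a step the paper's argument silently skips. The paper concludes directly that $x$ is weakly $r$-clean \emph{in} $Z(R)$, but an element that is von Neumann regular in $R$ and happens to lie in $Z(R)$ need not be regular in $Z(R)$, since the witness $y$ with $r=ryr$ has no reason to be central. Your preliminary collapse --- in a ring with no nontrivial idempotents, $ab$ and $ba$ are idempotents for any regular $a=aba\neq 0$, forcing $ab=ba=1$ and hence $Reg(R)=\{0\}\cup U(R)$ --- together with the observation that the inverse of a central unit is central, reduces regularity to invertibility and makes the transfer to $C(R)$ legitimate. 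So your write-up is not a different route so much as a repaired and complete version of the paper's route; if anything it should be preferred to the printed proof.
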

\begin{proof}
Let $R$ be weakly $r$-clean ring and $Z(R)$ be the center of $R$. Let $x\in Z(R)$ then there exists regular element $r\in R$ such that either $x=r$ or $x=r-1$ or $x=r+1$. If $x=r$ then clearly $x$ is weakly $r$-clean in $Z(R)$. If $x=r\pm 1$ then $r\in Z(R)$ as $x\pm 1 \in Z(R)$. Hence $Z(R)$ is weakly $r$-clean ring.
\end{proof}
  A ring $R$ is called graded ring(or more precisely, $\mathbb{Z}$-graded) if there exists a family of subgroups $\{R_n\}_{n\in Z}$ of $R$ such that $R=\underset{n}{\oplus}R_n$ (as abelian groups) and $R_nR_m\subseteq R_{n+m}$, for all $n,m\in \mathbb{Z}$. The relation of graded ring and weakly $r$-clean ring is given below.
  \begin{Prop}
    Let $R=R_0\oplus R_1\oplus R_2\oplus \cdot \cdot \cdot $ be a graded ring then the following hold:
    \begin{enumerate}
      \item If $R$ is weakly $r$-clean ring then $R_0$ is weakly $r$-clean ring.
      \item If $R_0$ is weakly $r$-clean ring and each $R_n$ is a torsion-less $R_0$ module, \textit{i.e.} $r\notin Z(R_0)\Rightarrow r\notin Z(R_n)$, then $R$ is weakly $r$-clean ring.
    \end{enumerate}
  \end{Prop}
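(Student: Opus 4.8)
The plan is to handle the two parts separately, reducing everything to the degree-zero component $R_0$ and the known behaviour of weakly $r$-clean rings under ring homomorphisms. For (i), I would exhibit $R_0$ as a homomorphic image of $R$ and then invoke Theorem 2.2. Since the grading is indexed by the non-negative integers and $R_mR_n\subseteq R_{m+n}$, the unity of $R$ lies in $R_0$ and the degree-zero part of a product equals the product of the degree-zero parts. Hence the projection $\pi:R\to R_0$ sending $\sum_{n\geq 0}a_n$ to $a_0$ is a surjective ring homomorphism, with kernel the two-sided ideal $R_+=\bigoplus_{n\geq 1}R_n$. Thus $R_0\cong R/R_+$ is a homomorphic image of the weakly $r$-clean ring $R$, and Theorem 2.2 immediately yields that $R_0$ is weakly $r$-clean. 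This step is routine.

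For (ii), the idea is to lift a weakly $r$-clean decomposition from $R_0$ to $R$, keeping the idempotent in degree zero and absorbing every higher homogeneous part into the regular element. Given $x=\sum_{n\geq 0}x_n\in R$, the hypothesis on $R_0$ lets me write its degree-zero component as $x_0=r_0+e_0$ or $x_0=r_0-e_0$ with $r_0\in Reg(R_0)$ and $e_0\in Idem(R_0)$. I take $\overline{e}=e_0$, which is still an idempotent of $R$, and set $\overline{r}=r_0+\sum_{n\geq 1}x_n$. Then $x=\overline{r}+\overline{e}$ in the type-$1$ case and $x=\overline{r}-\overline{e}$ in the type-$2$ case, so the second part reduces to the single claim that $\overline{r}\in Reg(R)$.

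Establishing $\overline{r}\in Reg(R)$ is the heart of the argument and the step I expect to be the main obstacle; this is precisely where the torsion-less hypothesis must enter. The plan is to begin from a von Neumann inverse $y_0$ of $r_0$ in $R_0$ (so that $r_0=r_0y_0r_0$) and to control $\overline{r}$ through its homogeneous components, whose lowest term is the regular element $r_0$. The condition $r\notin Z(R_0)\Rightarrow r\notin Z(R_n)$ is exactly what guarantees that $r_0$ continues to act without torsion on each component $R_n$, so that the cross terms produced by the higher parts $x_n$ cannot obstruct solvability and a quasi-inverse for $\overline{r}$ can be assembled degree by degree. Once $\overline{r}$ is shown to be regular in $R$, the decompositions $x=\overline{r}\pm\overline{e}$ finish the proof. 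I would flag that the delicate point is verifying that no torsion obstruction appears when passing from regularity in $R_0$ to regularity in $R$, since this is the sole place the module hypothesis on the $R_n$ is used, and it is where the argument is most at risk of needing an extra finiteness or nilpotence assumption on $R_+$.
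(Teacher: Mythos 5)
Your part (i) is correct, and it actually takes a slightly different (and cleaner) route than the paper: the paper decomposes $r_0\in R_0$ inside $R$ as $r\pm e$ and then argues that both summands already lie in $R_0$, via the claims $Idem(R)=Idem(R_0)$ and $R_0\cap Reg(R)\subseteq Reg(R_0)$, whereas you pass to the quotient $R_0\cong R/R_+$ and invoke Theorem 2.2. Your version avoids the assertion $Idem(R)=Idem(R_0)$ entirely, which is a point in its favour.

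Part (ii) is where there is a genuine gap. Your decomposition is exactly the paper's: write $x_0=r_0\pm e_0$, set $\overline{r}=r_0+x_1+\cdots+x_n$, so that $x=\overline{r}\pm e_0$. But the one claim that carries the entire proof, namely $\overline{r}\in Reg(R)$, is left as a plan ("assemble a quasi-inverse degree by degree") rather than an argument, and you yourself flag it as the step most likely to fail. It does fail as proposed: the degree-$k$ component of the equation $\overline{r}\,y\,\overline{r}=\overline{r}$ couples all lower-degree components of $y$ through the cross terms with $x_1,\dots,x_n$, and the torsion-less hypothesis gives no surjectivity statement that would let you solve for the degree-$k$ part of $y$ and continue the induction. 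What the hypothesis actually delivers is much weaker, and it is precisely what the paper uses: reading $Z(\cdot)$ as the set of zero divisors (the centre is denoted $C(R)$ in this paper), "torsion-less" says that if $r_0$ is not a zero divisor on $R_0$ then it annihilates no nonzero element of any $R_n$; hence if $t\overline{r}=0$ for some nonzero homogeneous $t$, the lowest-degree component gives $tr_0=0$, a contradiction, so $\overline{r}$ is a non-zero-divisor. This is the Ahn--Anderson argument for almost clean graded rings, and it establishes regularity only in the non-zero-divisor sense; it does not produce a von Neumann quasi-inverse, and neither does your sketch. So the decisive step of (ii) remains unproved in your write-up, and the degree-by-degree strategy you propose will not close it.
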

  \begin{proof}
    \begin{enumerate}
      \item Let $r_0\in R_0$, so $r_0=r+e$ or $r_0=r-e$, where $r\in Reg(R)$ and $e\in Idem(R)$. Since $Idem(R)=Idem(R_0)$, so $e\in R_0$ and $r\in R_0\cap Reg(R)\subseteq Reg(R_0)$. Hence $R_0$ is weakly $r$-clean ring.
      \item Let $x=x_0+x_1+x_2+\cdot \cdot \cdot +x_n \in R$, where $x_i\in R_i$. Write $x_0=r_0\pm e_0$, where $r_0\in Reg(R_0)$ and $e_0\in Idem(R_0)=Idem(R)$. Put $x'=r_0+x_1+x_2+\cdot \cdot \cdot +x_n$, so $x=x'\pm e_0$ where $e_0\in Idem(R_)$. If $x'\in Z(R)$ then there exists a non zero homogeneous $t\in R_n$ with $tx'=0$. But then $r_0t=0$, a contradiction.
    \end{enumerate}
  \end{proof}
\section{Weakly $g(x)$-$r$-clean rings}
\begin{Def}
  Let $g(x)$ be a fixed polynomial in $C(R)[x]$. An element $x\in R$ is called weakly $g(x)$-$r$-clean if $x=r\pm e$, where $r\in Reg(R)$ and $g(e)=0$. We say that $R$ is weakly $g(x)$-$r$-clean if every element is weakly $g(x)$-$r$-clean.
\end{Def}
If $g(x)=x^2-x$, then weakly $g(x)$-$r$-clean ring is similar to weakly $r$-clean ring. An element $s$ of $R$ is called root of the polynomial $g(x)\in C(R)[x]$ if $g(s)=0$. For $g(x)=x^2-x$, the ring $\mathbb{Z}_{(3)}\cap \mathbb{Z}_{(5)}$ is weakly $g(x)$-$r$-clean ring but not $g(x)$-$r$-clean ring. \par
Let $R$ and $S$ be rings and $\theta :C(R) \rightarrow C(S)$ be a ring homomorphism with $\theta (1)=1$. Then $\theta$ induces a map $\theta '$ from $C(R)[x]$ to $C(S)[x]$ such that for $g(x)=\sum_{i=0}^{n} a_ix^i \in C(R)[x]$, $\theta '(g(x)):=\sum_{i=0}^{n} \theta (a_i)x^i \in C(S)[x]$.
\begin{Prop}
  Let $\theta :R\rightarrow S$ be a ring epimorphism. If $R$ is weakly $g(x)$-$r$-clean ring then $S$ is weakly $\theta '(g(x))$-$r$-clean ring.
\end{Prop}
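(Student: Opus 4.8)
The plan is to lift, decompose, and push the decomposition forward through $\theta$. Since $\theta$ is an epimorphism, the first point I would verify is that $\theta$ restricts to a ring homomorphism $C(R)\to C(S)$ fixing $1$, so that the induced map $\theta'$ appearing in the statement is genuinely defined on $C(R)[x]$. Indeed, for $a\in C(R)$ and any $s\in S$, surjectivity gives $s=\theta(c)$ for some $c\in R$, whence $\theta(a)s=\theta(ac)=\theta(ca)=s\theta(a)$, so $\theta(a)\in C(S)$. This is the one place where the surjectivity (epimorphism) hypothesis is genuinely used; for a general homomorphism the image of a central element need not be central and $\theta'$ would not be available.

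Next I would take an arbitrary $s\in S$ and use surjectivity again to pick $a\in R$ with $\theta(a)=s$. As $R$ is weakly $g(x)$-$r$-clean, write $a=r+e$ or $a=r-e$ with $r\in Reg(R)$ and $g(e)=0$. Applying the additive, multiplicative map $\theta$ gives $s=\theta(r)+\theta(e)$ or $s=\theta(r)-\theta(e)$, and it then remains only to check that $\theta(r)$ is regular in $S$ and that $\theta(e)$ is a root of $h(x):=\theta'(g(x))$.

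For regularity, $r=ryr$ for some $y\in R$ yields $\theta(r)=\theta(r)\theta(y)\theta(r)$, so $\theta(r)\in Reg(S)$. For the root condition, write $g(x)=\sum_{i=0}^{n}a_i x^i$ with $a_i\in C(R)$, so that $h(x)=\sum_{i=0}^{n}\theta(a_i)x^i$ by definition of $\theta'$; then $h(\theta(e))=\sum_{i=0}^{n}\theta(a_i)\theta(e)^i=\theta\!\left(\sum_{i=0}^{n}a_i e^i\right)=\theta(g(e))=\theta(0)=0$. Hence $s=\theta(r)\pm\theta(e)$ is a weakly $\theta'(g(x))$-$r$-clean decomposition of $s$, and since $s$ was arbitrary, $S$ is weakly $\theta'(g(x))$-$r$-clean.

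I do not anticipate a serious obstacle. The substance is simply that both defining features of a weakly $g(x)$-$r$-clean decomposition, namely regularity of $r$ and the relation $g(e)=0$, are preserved by any unital ring homomorphism, while the $+/-$ alternative is preserved because $\theta$ is additive. The only step needing care is the preliminary one of confirming that $\theta'$ is well defined, which rests on surjectivity as noted above.
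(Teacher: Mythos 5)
Your proof is correct and follows essentially the same route as the paper's: lift $s\in S$ to $a\in R$ via surjectivity, decompose $a=r\pm e$, and push forward, checking that $\theta(r)$ stays regular and that $\theta(e)$ remains a root of $\theta'(g(x))$. The only addition is your preliminary check that $\theta$ carries $C(R)$ into $C(S)$ (so that $\theta'$ is well defined), which the paper leaves implicit but which is a worthwhile observation since it is exactly where surjectivity is needed beyond mere lifting.
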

\begin{proof}
  Let $g(x)=a_0+a_1x+\cdot\cdot\cdot +a_n x^n \in C(R)[x]$, then $\theta '(g(x))=\theta(a_0)+\theta( a_1)x+\cdot\cdot\cdot +\theta(a_n) x^n \in C(S)[x]$. Since $\theta$ is ring epimorphism so for any $s\in S$, there exists $x\in R$ such that $\theta(x)=s$. Let $x=r\pm s_0$ where $r\in Reg(R)$ and $g(s_0)=0$, as $R$ is weakly $g(x)$-$r$-clean ring. Now $s=\theta(x)=\theta(r\pm s_0)=\theta(r)\pm \theta(s_0)$. Clearly $\theta(r)\in Reg(S)$ and \begin{align*}
        \theta '(g(\theta(s_0))) &=\theta(a_0)+\theta(a_1)\theta(s_0)+\cdot\cdot\cdot +\theta(a_n)(\theta(s_0))^n \\
                                 &=\theta(a_0+a_1s_0+\cdot\cdot\cdot +a_ns_0^n) \\
                                 &=\theta(0)\\
                                 &=0
      \end{align*}
  Hence $S$ is $\theta'(g(x))$-$r$-clean ring.
\end{proof}
Next we extend the Theorem 2.3 to $g(x)$-$r$-clean ring.
\begin{Thm}
  Let $g(x)\in \mathbb{Z}[x]$ and $\{R_i\}_{i\in I}$ be a family of rings. Then $\underset {i\in I}{\prod}R_i$ is $g(x)$-$r$-clean ring \textit{if and only if} $R_i$'s are weakly $g(x)$-$r$-clean ring and at most one $R_{\alpha}$ is not $g(x)$-$r$-clean ring.
\end{Thm}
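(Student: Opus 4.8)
The plan is to mirror the proof of Theorem 2.3 almost verbatim, replacing the idempotent $e$ throughout by a root of $g$, i.e.\ an element with $g(e)=0$. (Here the displayed statement should read ``$\prod_{i\in I}R_i$ is \emph{weakly} $g(x)$-$r$-clean'', matching Theorem 2.3.) Two structural facts drive everything. First, regularity is componentwise: $(r_i)\in Reg(\prod_i R_i)$ if and only if $r_i\in Reg(R_i)$ for every $i$, since $r_i=r_iy_ir_i$ for each $i$ is the same as $(r_i)=(r_i)(y_i)(r_i)$. Second, because $g(x)\in\mathbb{Z}[x]$, evaluation of $g$ is componentwise: for $e=(e_i)$ one has $g(e)=(g(e_i))_i$, so $g(e)=0$ in the product if and only if $g(e_i)=0$ in each $R_i$. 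This second point is exactly why the coefficients are taken in $\mathbb{Z}[x]$ rather than in $C(R)[x]$: integer coefficients are fixed by every projection and split through the product, whereas a genuinely central polynomial would split into different polynomials on the different factors.

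For the forward direction, first note that each projection $\pi_i\colon\prod_j R_j\to R_i$ is a ring epimorphism, and since $g$ has integer coefficients the induced map fixes it ($\pi_i'(g(x))=g(x)$); hence Proposition 3.2 gives that each $R_i$ is weakly $g(x)$-$r$-clean. For the ``at most one'' clause, suppose to the contrary that two factors $R_{\alpha_1}$ and $R_{\alpha_2}$ both fail to be $g(x)$-$r$-clean. The key observation (which also makes the Theorem 2.3 argument run) is that the map $x\mapsto -x$ interchanges the two decomposition types: from $x=r+e$ with $g(e)=0$ one gets $-x=(-r)-e$ with $-r\in Reg(R)$ and $g(e)=0$, and conversely. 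Consequently a ring that is weakly $g(x)$-$r$-clean but not $g(x)$-$r$-clean contains both a ``type $1$ only'' element and a ``type $2$ only'' element. Choose a type-$1$-only $x_{\alpha_1}\in R_{\alpha_1}$ and a type-$2$-only $x_{\alpha_2}\in R_{\alpha_2}$, and let $x\in\prod_i R_i$ have these two coordinates and $0$ elsewhere. Since the global sign in a decomposition of $x$ is forced to be uniform across coordinates, a $+$ decomposition fails at $\alpha_2$ and a $-$ decomposition fails at $\alpha_1$; thus $x$ is not weakly $g(x)$-$r$-clean, a contradiction.

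For the converse, if every $R_i$ is $g(x)$-$r$-clean then by componentwise regularity and componentwise evaluation of $g$ the product is $g(x)$-$r$-clean, hence weakly. Otherwise let $R_{\alpha_0}$ be the unique factor that is weakly but not $g(x)$-$r$-clean, all others being $g(x)$-$r$-clean. Given $x=(x_i)$, write $x_{\alpha_0}=r_{\alpha_0}\pm e_{\alpha_0}$ and read off the sign. The point is that every element of a $g(x)$-$r$-clean ring admits \emph{both} a type-$1$ and a type-$2$ decomposition: applying the definition to $-x_i$ gives $-x_i=r_i+e_i$, whence $x_i=(-r_i)-e_i$ with $-r_i\in Reg(R_i)$ and $g(e_i)=0$, and symmetrically. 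Hence for each $i\neq\alpha_0$ we may match the sign chosen at $\alpha_0$, and assembling the coordinates gives $\overline r=(r_i)\in Reg(\prod_i R_i)$ and $\overline e=(e_i)$ with $g(\overline e)=0$ and $x=\overline r\pm\overline e$. I expect the main obstacle to be not any single computation but keeping the two bookkeeping points straight simultaneously: the forced uniformity of the sign across all coordinates, and the componentwise evaluation of $g$, which is precisely where the hypothesis $g(x)\in\mathbb{Z}[x]$ must be invoked.
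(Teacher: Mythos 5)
Your proof is correct and follows essentially the same route as the paper's: the paper's forward direction simply defers to the Theorem 2.3 argument (a type-$1$-only element in one bad factor and a type-$2$-only element in another yield an element of the product admitting no uniform-sign decomposition), and its converse is exactly your sign-matching construction, including the componentwise evaluation $g((s_\alpha))=(g(s_\alpha))$ that uses $g(x)\in\mathbb{Z}[x]$. You also correctly flag the typo in the statement (the product should be \emph{weakly} $g(x)$-$r$-clean) and merely make explicit the sign-flip facts the paper leaves implicit.
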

\begin{proof}$(\Rightarrow)$ Similar to Theorem 2.3.\\
  $(\Leftarrow)$ If each $R_\alpha$ is $g(x)$-$r$-clean ring, then $R=\prod R_\alpha$ is $g(x)$-$r$-clean. Assume $R_{\alpha_0}$ is weakly $g(x)$-$r$-clean but not $g(x)$-$r$-clean and that all other $R_{\alpha}$'s are $g(x)$-$r$-clean. Let $x=x_\alpha \in R$ so in $R_{\alpha_0}$ we can write $x_{\alpha_0}=r_{\alpha_0}+s_{\alpha_0}$ or $x_{\alpha_0}=r_{\alpha_0}-s_{\alpha_0}$, where $r_{\alpha_0}\in Reg(R_{\alpha_0})$ and $g(s_{\alpha_0})=0$ in $R_{\alpha_0}$ . If $x_{\alpha_0}=r_{\alpha_0}+s_{\alpha_0}$ then for $\alpha \neq \alpha_0$ assume, $x_{\alpha}=r_{\alpha}+s_{\alpha}$ and if $x_{\alpha_0}=r_{\alpha_0}-s_{\alpha_0}$ then for $\alpha \neq \alpha_0$ assume, $x_{\alpha}=r_{\alpha}-s_{\alpha}$, where $r_{\alpha}\in R_{\alpha}$ and $g(s_{\alpha})=0$ in $R_{\alpha}$. Then $r=(r_\alpha)\in Reg(R)$ and $g(s=\{s_{\alpha}\})=a_0\{1_{R_i}\}+a_1\{s_i\}+\cdot\cdot\cdot +a_n\{s_i^n\}=\{a_0+a_1s_i+\cdot\cdot\cdot +a_ns_i^n\}$=0.
\end{proof}
 \begin{Thm}
   Let $R$ be a ring and $a,b\in R$ then $R$ is weakly $(ax^{2n}-bx)$-$r$-clean ring \textit{if and only if} $R$ is weakly $(ax^{2n}+bx)$-$r$-clean ring, for $n\in \mathbb{N}$.
 \end{Thm}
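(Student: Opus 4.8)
The plan is to exploit the fact that the exponent $2n$ is \emph{even}, so that passing from a root of one polynomial to a root of the other is accomplished simply by negation. First I would record the elementary but decisive observation that for any $s\in R$ we have $(-s)^{2n}=s^{2n}$, whence
\[
a(-s)^{2n}+b(-s)=as^{2n}-bs \quad\text{and}\quad a(-s)^{2n}-b(-s)=as^{2n}+bs.
\]
In particular, $s$ is a root of $ax^{2n}-bx$ \emph{if and only if} $-s$ is a root of $ax^{2n}+bx$; the assignment $s\mapsto -s$ therefore gives a bijection between the root sets of the two polynomials.

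For the forward implication I would take an arbitrary $x\in R$ and use the hypothesis that $R$ is weakly $(ax^{2n}-bx)$-$r$-clean to write $x=r+s$ or $x=r-s$, where $r\in Reg(R)$ and $as^{2n}-bs=0$. Setting $t=-s$, the root correspondence gives $at^{2n}+bt=0$, and the two cases rewrite as $x=r-t$ and $x=r+t$ respectively. Since the regular part $r$ is untouched and $t$ is a root of $ax^{2n}+bx$, each case exhibits $x$ as weakly $(ax^{2n}+bx)$-$r$-clean (with the type interchanged). As $x$ was arbitrary, $R$ is weakly $(ax^{2n}+bx)$-$r$-clean.

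The converse is entirely symmetric: applying the same negation $s\mapsto -s$ (which is an involution) converts a weakly $(ax^{2n}+bx)$-$r$-clean decomposition into a weakly $(ax^{2n}-bx)$-$r$-clean one. I do not anticipate any genuine obstacle here; the only point requiring care is the sign bookkeeping, namely that negating the root term flips a type $1$ decomposition into a type $2$ decomposition and vice versa. This is precisely why the \emph{weakly} clean framework, with its $\pm$ option, is needed rather than the ordinary one. It is worth emphasising that the evenness of the exponent is essential: for an odd power the identity $(-s)^{2n}=s^{2n}$ fails and the root correspondence collapses.
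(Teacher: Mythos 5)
Your proof is correct and rests on exactly the same key observation as the paper's, namely that $(-s)^{2n}=s^{2n}$ makes $s\mapsto -s$ a bijection between the root sets of $ax^{2n}-bx$ and $ax^{2n}+bx$. The only (immaterial) difference in bookkeeping is that you negate just the root term and flip the decomposition type, whereas the paper applies the hypothesis to $-x$ and negates both the regular part and the root; both versions are fine.
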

 \begin{proof}
   Suppose $R$ is weakly $(ax^{2n}-bx)$-$r$-clean ring. Since $(as^{2n}-bs)=0\Rightarrow (a(-s)^{2n}+b(-s))=0$. So for $a\in R$, $-a=r\pm s$, where $(as^{2n}-bs)=0$ and $r\in R$. Hence $a=(-r)\pm (-s)$ and $(a(-s)^{2n}+b(-s))=0$. Similarly the converse is also true.
 \end{proof}
 \begin{Prop}
   Let $2\leq n\in \mathbb{N}$. If for every $a\in R$, $a=r\pm t$, where $r\in Reg(R)$ and $t^{n-1}=1$ then $R$ is weakly $(x^n-x)$-$r$-clean ring.
 \end{Prop}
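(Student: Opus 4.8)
The plan is to show that the decomposition supplied by the hypothesis is, verbatim, a weakly $(x^n-x)$-$r$-clean decomposition, so that essentially no work is required beyond a single algebraic identity. Recall that, by the definition of weakly $g(x)$-$r$-clean ring applied to $g(x)=x^n-x\in C(R)[x]$ (its coefficients $1,0,\dots,0,-1$ are integers, hence central, so $g(x)$ is a legitimate choice), an element $a\in R$ is weakly $(x^n-x)$-$r$-clean precisely when $a=r\pm s$ for some $r\in Reg(R)$ and some $s$ with $g(s)=s^n-s=0$, that is, $s^n=s$.

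First I would fix an arbitrary $a\in R$ and invoke the hypothesis to write $a=r+t$ or $a=r-t$, with $r\in Reg(R)$ and $t^{n-1}=1$. The only real step is then to observe that multiplying $t^{n-1}=1$ by $t$ gives $t^n=t\cdot t^{n-1}=t$, so that $g(t)=t^n-t=0$; in other words, $t$ is itself a root of $g$. Consequently the given expression $a=r\pm t$ already displays $a$ as a regular element plus or minus a root of $x^n-x$, which is exactly a weakly $(x^n-x)$-$r$-clean decomposition of $a$.

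Since $a\in R$ was arbitrary, this shows every element of $R$ is weakly $(x^n-x)$-$r$-clean, and hence $R$ is a weakly $(x^n-x)$-$r$-clean ring. The argument is genuinely this short, so there is no serious obstacle; the one point worth stating carefully is that the definition of weakly $g(x)$-$r$-clean requires the second summand $s$ only to satisfy $g(s)=0$, imposing no condition of idempotency or centrality on it. This is exactly what lets the elements $t$ with $t^{n-1}=1$ (which need be neither central nor idempotent) serve directly, and it is why no appeal to the earlier lemmas on idempotents is needed.
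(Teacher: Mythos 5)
Your proof is correct: the whole content of the statement is the observation that $t^{n-1}=1$ forces $t^n=t$, so $t$ is a root of $g(x)=x^n-x$ and the hypothesized decomposition $a=r\pm t$ is already, verbatim, a weakly $(x^n-x)$-$r$-clean decomposition. You also correctly note that the definition of weakly $g(x)$-$r$-clean places no idempotency or centrality demand on the second summand, which is the only point where one could imagine a gap. The paper, by contrast, disposes of the proposition in one line by saying it ``follows from Theorem 3.4,'' the result asserting that $R$ is weakly $(ax^{2n}-bx)$-$r$-clean if and only if it is weakly $(ax^{2n}+bx)$-$r$-clean. That route is less transparent: the $\pm$ in the definition already absorbs any sign issue, so no appeal to the sign-flip theorem is needed, and moreover $x^n-x$ only matches the shape $ax^{2m}\pm bx$ treated in Theorem 3.4 when $n$ is even, so the citation does not obviously cover the general case. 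Your direct verification is both complete and, if anything, a cleaner justification than the one the paper offers.
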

 \begin{proof}
   The proof follows from Theorem 3.4.
 \end{proof}
\begin{Prop}
  Let $g(x)=(x^n-x)\in C(R)[x]$ and $g(e)=0$. Let $a\in e^{n-1}Re^{n-1}$ be strongly $g(x)$-clean in $e^{n-1}Re^{n-1}$ then $a$ is strongly $g(x)$-clean in $R$.
\end{Prop}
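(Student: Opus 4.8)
The plan is to mimic the corner-to-full lifting argument of Lemma \ref{10}, replacing idempotents by roots of $g(x)=x^{n}-x$ and additionally tracking the commuting condition required by strongness. The first step is to record that the hypothesis $g(e)=0$, i.e. $e^{n}=e$, forces $f:=e^{n-1}$ to be an idempotent, since $(e^{n-1})^{2}=e^{2n-2}=e^{n-2}e^{n}=e^{n-2}e=e^{n-1}$. Thus $e^{n-1}Re^{n-1}=fRf$ is a ring with identity $f$, and I may write the given strongly $g(x)$-clean decomposition of $a$ in $fRf$ as $a=u+s$, where $u$ is a unit of $fRf$ (with inverse $v\in fRf$, so $uv=vu=f$), where $s\in fRf$ satisfies $s^{n}=s$, and where $us=su$. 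Note that since $g(x)=x^{n}-x$ has no constant term, the condition $g(s)=0$ means the same thing computed in $fRf$ as in $R$.

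Next I would produce the candidate decomposition in $R$. The naive guess $U=u+(1-f)$ (which is certainly a unit of $R$) forces $S=a-U=s-(1-f)$, and a short computation using $s(1-f)=(1-f)s=0$ gives $S^{n}=s^{n}+(-1)^{n}(1-f)=s+(-1)^{n}(1-f)$, so $S^{n}=S$ only when $n$ is odd. This parity failure is the one genuine obstacle, and it is resolved by attaching the complementary idempotent to the root rather than to the unit: I set
\[
U=u-(1-f),\qquad S=s+(1-f).
\]
Then $U+S=u+s=a$, and $1-f$ is an idempotent with $(1-f)w=w(1-f)=0$ for every $w\in fRf$, hence a root of $g$ in the complementary corner $(1-f)R(1-f)$ that is orthogonal to $s$.

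It then remains to verify the three defining properties, and the only care needed is the sign bookkeeping. For the unit condition, the candidate inverse is $V=v-(1-f)$; using $u(1-f)=(1-f)u=0$ and $v(1-f)=(1-f)v=0$ one gets $UV=uv+(1-f)^{2}=f+(1-f)=1$ and similarly $VU=1$, so $U\in U(R)$. For the root condition, $s$ and $1-f$ are orthogonal, so in the expansion of $(s+(1-f))^{n}$ every monomial involving both $s$ and $1-f$ contains an adjacent factor $s(1-f)$ or $(1-f)s$ and hence vanishes, leaving $S^{n}=s^{n}+(1-f)^{n}=s+(1-f)=S$; thus $g(S)=0$ in $R$. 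Finally, for the commuting condition a direct expansion gives $US=us-(1-f)$ and $SU=su-(1-f)$, which agree since $us=su$. Hence $a=U+S$ is a strongly $g(x)$-clean decomposition in $R$, completing the proof; the choice $S=s+(1-f)$ rather than $s-(1-f)$ is exactly what makes the argument uniform in $n$.
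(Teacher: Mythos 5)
Your proof is correct and follows essentially the same route as the paper's: both take the unit to be $u-(1-e^{n-1})$ and the root to be $s+(1-e^{n-1})$, using orthogonality with the corner idempotent to verify $UV=VU=1$ and $S^{n}=S$. You are in fact slightly more complete than the printed proof, since you also verify that $e^{n-1}$ is idempotent and that $US=SU$, i.e.\ the ``strongly'' part of the conclusion, which the paper's proof leaves unchecked.
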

\begin{proof}
  Let $a=u+f$, where $u\in U(e^{n-1}Re^{n-1})$ and $g(f)=0$ in $e^{n-1}Re^{n-1}$. Then $uw=e^{n-1}=wu$, for some $w\in U(e^{n-1}Re^{n-1})$. Consider $v=u-(1-e^{n-1})$, then $\{u-(1-e^{n-1})\}\{w-(1-e^{n-1})\}=1$, so $v=u-(1-e^{n-1})$ is a unit in $R$. Now $a-v=f+(1-e^{n-1})$ and $\{f+(1-e^{n-1})\}^n=f^n+(1-e^{n-1})^n=f+(1-e^{n-1})$, so $a$ is $g(x)$-clean ring in $R$.
\end{proof}
\begin{Prop}
  Let $R$ be an abelian ring and $g(x)\in C(R)[x]$ then if $a$ is $g(x)$-clean element in $R$ then $ae^{n-1}$ is also $g(x)$-clean element in $R$ for any root $e$ of g(x).
\end{Prop}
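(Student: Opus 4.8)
The plan is to imitate the classical fact that multiplying a clean element by an idempotent preserves cleanness (the analogue of Lemma~2.1 of \cite{2} invoked earlier in the paper), now carried out for $g(x)$-cleanness. As in the surrounding Propositions~3.5 and~3.6 I take $g(x)=x^{n}-x$ with $n\ge 2$, since that is the polynomial for which $e^{n-1}$ is the relevant object. First I would record the elementary facts. If $e$ is a root of $g$ then $e^{n}=e$, so $\epsilon:=e^{n-1}$ satisfies $\epsilon^{2}=e^{2n-2}=e^{n-1}=\epsilon$ and is an idempotent; because $R$ is abelian, $\epsilon$ is central. Spelling out the hypothesis, $g(x)$-cleanness of $a$ gives $a=u+t$ with $u\in U(R)$ and $g(t)=0$, that is, $t^{n}=t$.

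Next I would pass to the corner ring $\epsilon R\epsilon=e^{n-1}Re^{n-1}$. Multiplying by the central idempotent $\epsilon$ yields $ae^{n-1}=a\epsilon=u\epsilon+t\epsilon$. Here $u\epsilon$ is a unit of $\epsilon R\epsilon$ with inverse $u^{-1}\epsilon$, since centrality of $\epsilon$ gives $u\epsilon\cdot u^{-1}\epsilon=uu^{-1}\epsilon^{2}=\epsilon$; and $t\epsilon$ is again a root of $g$, because $(t\epsilon)^{n}=t^{n}\epsilon^{n}=t\epsilon$. Thus $a\epsilon$ admits a $g(x)$-clean decomposition inside the corner $\epsilon R\epsilon$.

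Finally I would lift this corner decomposition back to $R$, exactly as in the argument of Proposition~3.6. Set $v:=u\epsilon-(1-\epsilon)$ and $s:=t\epsilon+(1-\epsilon)$, so that $a\epsilon=v+s$. A direct check shows $v\in U(R)$ with inverse $u^{-1}\epsilon-(1-\epsilon)$, using $u\epsilon(1-\epsilon)=0=(1-\epsilon)u^{-1}\epsilon$ together with $(1-\epsilon)^{2}=1-\epsilon$. Moreover $t\epsilon$ and $1-\epsilon$ are orthogonal, since $(t\epsilon)(1-\epsilon)=0=(1-\epsilon)(t\epsilon)$, so all cross terms vanish and $s^{n}=(t\epsilon)^{n}+(1-\epsilon)^{n}=t\epsilon+(1-\epsilon)=s$, whence $g(s)=0$. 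Therefore $ae^{n-1}=v+s$ is a genuine $g(x)$-clean decomposition in $R$.

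The only real obstacle is the bookkeeping with the complementary idempotent $1-\epsilon$: it must be carried with the correct sign (added into the root part $s$ while subtracted from the unit part $v$) so that $s$ remains annihilated by $g$. I have also deliberately avoided quoting Proposition~3.6 verbatim, since the corner decomposition $u\epsilon+t\epsilon$ need not be \emph{strong} (the factors $u\epsilon$ and $t\epsilon$ need not commute even when $R$ is abelian); performing the lift by hand sidesteps the strongness hypothesis and delivers exactly the (not necessarily strong) $g(x)$-clean conclusion required.
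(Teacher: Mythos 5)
Your proposal is correct and follows essentially the same route as the paper: decompose $ae^{n-1}=ue^{n-1}+fe^{n-1}$ in the corner ring $e^{n-1}Re^{n-1}$, check that $ue^{n-1}$ is a unit there and that $fe^{n-1}$ is again a root of $g(x)=x^{n}-x$, and then lift back to $R$ via $v=u\epsilon-(1-\epsilon)$, $s=t\epsilon+(1-\epsilon)$ exactly as in the proof of Proposition~3.6. Your only departure is to perform that lift explicitly rather than leaving it implicit, which is a sensible precaution (Proposition~3.6 is stated for \emph{strongly} $g(x)$-clean elements, though its proof never uses strongness) but not a different argument.
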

\begin{proof}
  Let $a=u+f$, where $u\in U(R)$ and $g(f)=0$ in $R$. Suppose $e$ be any root of $g(x)$ in $R$. Then $ae^{n-1}=ue^{n-1}+fe^{n-1}$ but $ue^{n-1}\in U(e^{n-1}Re^{n-1})$ and $(fe^{n-1})^n=f^ne^{n-1}=fe^{n-1}$. So $ae^{n-1}$ is $g(x)$-clean in $R$.
\end{proof}

\section{Weakly $\star$-clean rings and $\star$-$r$-clean ring}
A ring $R$ is a $\star$-ring (or ring with involution) if there exists an operation $\star \,\,:\,\,R\rightarrow R$ such that for all $x,y\in R$, $(x+y)^{\star}=x^{\star}+y^{\star}$, $(xy)^{\star}=y^{\star}x^{\star}$ and $(x^{\star})^{\star}=x$. An element $p$ of a $\star$-ring is a projection if $p^2=p=p^{\star}$. Obviously, $0$ and $1$ are projections of any $\star$-ring. Henceforth $P(R)$ will denote the set of all projections in a $\star$-ring.  \par
Here we define the concept of weakly $\star$-clean ring and discuss some properties of weakly $\star$-clean ring.
\begin{Def}
  An element $x$ of a $\star$-ring $R$ is said to be weakly $\star$-clean if $x=u + p$ or $x = u - p$, where $u\in U(R)$ and $p\in P(R)$. A $\star$-ring $R$ is said to be weakly $\star$-clean ring if every elements are weakly $\star$-clean.
\end{Def}
\begin{example}
  \begin{enumerate}
    \item  Units, elements in $J(R)$ and nilpotents of a $\star$-ring $R$ are weakly $\star$-clean.
    \item  Idempotents of a $\star$-regular rings are weakly $\star$-clean.
  \end{enumerate}
\end{example}
\begin{Lem}
  Let $R$ be a boolean $\star$-ring. Then $R$ is weakly $\star$-clean \textit{if and only if} $\star =1_R$ is the identity map of $R$.
\end{Lem}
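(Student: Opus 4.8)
The plan is to exploit the very rigid structure of a boolean $\star$-ring. Recall that in a boolean ring every element is idempotent, the ring is commutative, it has characteristic $2$ (so $-y=y$ for all $y$), and its only unit is $1$: indeed if $u\in U(R)$, then from $u^2=u$, multiplying by $u^{-1}$ yields $u=1$, whence $U(R)=\{1\}$. I would also first record the standard fact that $1^{\star}=1$ in any $\star$-ring: from $x^{\star}=(1\cdot x)^{\star}=x^{\star}1^{\star}$ together with the surjectivity of $\star$, one sees that $1^{\star}$ is a left identity, hence equals $1$. These observations reduce the whole problem to a short computation.

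For the forward direction, suppose $R$ is weakly $\star$-clean and fix $x\in R$. A weakly $\star$-clean decomposition reads $x=u+p$ or $x=u-p$ with $u\in U(R)=\{1\}$ and $p\in P(R)$; since $-p=p$ in characteristic $2$, both forms collapse to $x=1+p$. Thus $p=x-1=x+1$ must be a projection, so in particular $p=p^{\star}$. Computing $p^{\star}=(x+1)^{\star}=x^{\star}+1^{\star}=x^{\star}+1$ and equating with $p=x+1$ gives $x=x^{\star}$. As $x$ was arbitrary, $\star=1_R$.

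For the converse, assume $\star=1_R$. Then $y^{\star}=y$ for every $y$, and since $y^2=y$ already holds in a boolean ring, every element of $R$ is a projection, i.e.\ $P(R)=R$. Given any $x\in R$, write $x=1+(x-1)$: here $1\in U(R)$ and $x-1\in P(R)$, so $x$ is weakly $\star$-clean of type $1$. Hence $R$ is weakly $\star$-clean.

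I expect the only genuinely delicate point to be the bookkeeping that forces the two forms $u\pm p$ to coincide; once one observes that characteristic $2$ collapses them and that the sole unit is $1$, the projection requirement $p=p^{\star}$ translates verbatim into self-adjointness of every element. The auxiliary identity $1^{\star}=1$ is the one fact that must be in place before the computation $p^{\star}=x^{\star}+1$ can be read off, and establishing it cleanly is the small hurdle in an otherwise direct argument.
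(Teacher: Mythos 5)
Your proof is correct and follows essentially the same route as the paper's: both reduce to $U(R)=\{1\}$ and characteristic $2$ to collapse the decomposition to $x=1+p$, then use the projection condition (the paper notes $a=1\pm p\in P(R)$ directly, you equivalently extract $x^{\star}=x$ from $p^{\star}=p$), and the converse in both cases is the observation that every element of a boolean ring with $\star=1_R$ is a projection. No substantive difference.
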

\begin{proof}
  It is clear that boolean rings are clean. Suppose that $R$ is weakly $\star$-clean. Given any $a\in R$, we have $-a=u\pm p=\pm p+1=\pm p-1$, for some $p\in P(R)$. So we have $a=1\pm p \in P(R)$. Hence $a^{\star}=a$, which implies $\star=1_R$. Conversely if $\star=1_R$ then every idempotent of $R$ is a projection. Thus, $R$ is a weakly $\star$-clean.
\end{proof}
\begin{example}
  $R=\mathbb{Z}_2\oplus \mathbb{Z}_2$ with involution $\star$ defined by $(a,b)^{\star}=(b,a)$ is weakly clean but not weakly $\star$-clean.
\end{example}
\begin{Lem}
  Let $R$ be a $\star$-ring. If $2\in U(R)$, then for any $u^2=1,\,u^{\star}=u\in R$ \textit{if and only if} every idempotent of $R$ is a projection.
\end{Lem}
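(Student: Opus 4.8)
The plan is to exploit the classical bijection between idempotents and square roots of unity that becomes available precisely when $2\in U(R)$, and to check that this bijection is compatible with the involution $\star$. Concretely, I would first record that the correspondence $e\mapsto 2e-1$ sends idempotents to elements squaring to $1$: if $e^2=e$ then $(2e-1)^2=4e^2-4e+1=1$. Its inverse is $u\mapsto \frac{1}{2}(u+1)$, since for $u^2=1$ one computes $\left(\frac{1}{2}(u+1)\right)^2=\frac{1}{4}(u^2+2u+1)=\frac{1}{2}(u+1)$. Both formulas make sense because $2$ is invertible, and I would note that $2=1+1$ is central, so $2^{-1}$ is central as well.

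Next I would determine how $\star$ acts on these expressions. Since $\star$ is additive and $1^{\star}=1$, we have $2^{\star}=2$, and applying $\star$ to $2\cdot 2^{-1}=1$ gives $(2^{-1})^{\star}=2^{-1}$. Consequently, for an idempotent $e$ with $u=2e-1$ we get $u^{\star}=2e^{\star}-1$, so that $u^{\star}=u$ is equivalent to $2e^{\star}=2e$, i.e.\ (cancelling the unit $2$) to $e^{\star}=e$. In the reverse direction, for $u$ with $u^2=1$ and $e=\frac{1}{2}(u+1)$, the centrality of $2^{-1}$ gives $e^{\star}=\frac{1}{2}(u^{\star}+1)$, so $e^{\star}=e$ is equivalent to $u^{\star}=u$.

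With this dictionary in place, the lemma becomes immediate in both directions. Assuming every square root of unity is symmetric, any idempotent $e$ produces $u=2e-1$ with $u^2=1$, whence $u^{\star}=u$ and therefore $e^{\star}=e$, so $e$ is a projection. Conversely, assuming every idempotent is a projection, any $u$ with $u^2=1$ yields the idempotent $e=\frac{1}{2}(u+1)$, which is then a projection, forcing $u^{\star}=u$. No serious obstacle arises here; the only points requiring genuine care are the verifications that $2^{\star}=2$, that $(2^{-1})^{\star}=2^{-1}$, and that $2^{-1}$ is central, since these are exactly what justifies passing $\star$ through the scalar $\frac{1}{2}$ and cancelling it to collapse the two conditions onto one another.
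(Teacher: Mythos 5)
Your proof is correct and complete: the bijection $e\mapsto 2e-1$ with inverse $u\mapsto \tfrac{1}{2}(u+1)$, together with the observations that $1^{\star}=1$, $2^{\star}=2$, $(2^{-1})^{\star}=2^{-1}$, and that $2^{-1}$ is central, is exactly the standard argument for this equivalence. The paper itself gives no proof but simply cites Lemma 2.3 of \cite{11}, and your argument is the one that reference uses, so you have in effect supplied the omitted details.
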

\begin{proof}

Proof is given in Lemma 2.3 \cite{11}.
\end{proof}
\begin{Cor}
  Let $R$ be a $\star$-ring with $2\in U(R)$. The following are equivalent:
\begin{enumerate}
  \item $R$ is weakly clean and every unit of $R$ is self-adjoint.(\textit{i.e.}, $u^{\star}=u$ for every unit $u$ ).
  \item $R$ is weakly $\star$-clean and $\star=1_R$.
\end{enumerate}
\end{Cor}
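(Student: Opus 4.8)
The plan is to prove the two implications separately, using the preceding Lemma, which under the hypothesis $2\in U(R)$ equates the statement ``every $u$ with $u^2=1$ satisfies $u^{\star}=u$'' with ``every idempotent of $R$ is a projection.''

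The implication (ii) $\Rightarrow$ (i) is essentially immediate. If $\star=1_R$, then every element of $R$ is self-adjoint, so in particular every unit is self-adjoint, and moreover every idempotent $e$ satisfies $e^{\star}=e$; thus $P(R)=Idem(R)$. Hence a weakly $\star$-clean decomposition $x=u\pm p$ is at the same time a weakly clean decomposition, and a weakly $\star$-clean ring is automatically weakly clean.

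The content lies in (i) $\Rightarrow$ (ii). First I would note that if every unit of $R$ is self-adjoint, then in particular every $u\in R$ with $u^2=1$ is self-adjoint, since such a $u$ is its own inverse and hence a unit. By the preceding Lemma this forces $Idem(R)=P(R)$, i.e. every idempotent is a projection. Consequently the weakly clean hypothesis already gives $R$ weakly $\star$-clean, since any decomposition $x=u\pm e$ now has $e\in P(R)$. It remains to deduce $\star=1_R$, and here is the one step that uses the weakly clean structure rather than just the Lemma: given an arbitrary $a\in R$, write $a=u\pm e$ with $u$ a unit and $e$ an idempotent; applying $\star$ and using that $u$ is self-adjoint and $e$ is a projection gives $a^{\star}=u^{\star}\pm e^{\star}=u\pm e=a$. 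As $a$ was arbitrary, $\star=1_R$.

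I expect no serious obstacle: the hypothesis $2\in U(R)$ enters only through the preceding Lemma, and once idempotents are identified with projections the two remaining conclusions follow formally. The only point deserving attention is that the passage to $\star=1_R$ genuinely requires the weakly clean decomposition of \emph{every} element, not merely of the units and idempotents themselves.
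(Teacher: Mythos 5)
Your proposal is correct and follows essentially the same route as the paper: the paper likewise reduces (i) $\Rightarrow$ (ii) to showing that the idempotent $e$ in a weakly clean decomposition is a projection (via the self-adjoint unit $1-2e$ together with $2\in U(R)$, i.e.\ the preceding Lemma), and then applies $\star$ to $a=u\pm e$ to get $a^{\star}=a$. The only cosmetic difference is that you invoke the Lemma once globally to identify $Idem(R)$ with $P(R)$, whereas the paper runs the same computation on the particular idempotent appearing in each decomposition.
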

\begin{proof}
$(1)\Rightarrow (2)$. Let $a\in R$. Then $a=u\pm p$, for some $p\in P(R)$ and $u\in U(R)$. Note that $(1-2p)\in U(R)$, so $(1-2p)^{\star}=(1-2p)\Rightarrow p^{\star}=p$ by Lemma 4.5. Also $a^{\star}=(u\pm p)^{\star}=u^{\star}\pm p^{\star}=u\pm p=a$.\\
$(2)\Rightarrow (1).$ is trivial.
\end{proof}
For a $\star$-ring $R$, an element $x\in R$ is called \textit{self adjoint square root of 1} if $x^2=1$ and $x^{\star}=x$.
\begin{Thm}
  Let $R$ be a $\star$-ring, the following are equivalent:\\
(1) $R$ is weakly $\star$-clean and $2\in U(R)$.\\
(2) Every element of $R$ is a sum of unit and a self adjoint square root of 1 or an element of the form $2p+1$, where $p^2=p=p^{\star}$.
\end{Thm}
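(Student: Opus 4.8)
The plan is to organize everything around the bijective substitution $a = 2b+1$ (equivalently $b=\tfrac{1}{2}(a-1)$), which is available whenever $2\in U(R)$, together with the standard correspondence between projections and self-adjoint square roots of $1$: if $p^2=p=p^\star$ then $1-2p$ satisfies $(1-2p)^2=1-4p+4p^2=1$ and $(1-2p)^\star=1-2p$, and conversely $q:=\tfrac{1}{2}(1-v)$ is a projection whenever $v^2=1$ and $v^\star=v$. Under this substitution the two types of weakly $\star$-clean decomposition ($u+p$ and $u-p$) will map onto the two alternatives in statement (2) ($u+(2p+1)$ and $u+(\text{self-adjoint square root of }1)$).

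For $(1)\Rightarrow(2)$, I would take an arbitrary $a\in R$ and, using $2\in U(R)$, form $b=\tfrac{1}{2}(a-1)$ so that $a=2b+1$. Applying the weakly $\star$-clean decomposition to $b$ gives two cases. If $b=u+p$ with $u\in U(R)$ and $p\in P(R)$, then $a=2u+(2p+1)$, a unit plus an element of the form $2p+1$. If instead $b=u-p$, then $a=2u+(1-2p)$, where $2u\in U(R)$ and $1-2p$ is a self-adjoint square root of $1$ by the computation above. Hence every $a$ has the form required in (2).

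For $(2)\Rightarrow(1)$, the first and most delicate task is to recover $2\in U(R)$ from (2) alone, since this is not automatic. I would apply (2) to the element $a=1$, writing $1=u+w$ with $u\in U(R)$. If $w=v$ is a self-adjoint square root of $1$, then $u=1-v$ is a unit and $(1-v)(1+v)=1-v^2=0$; cancelling the unit $1-v$ forces $1+v=0$, so $v=-1$ and $u=2\in U(R)$. If instead $w=2p+1$ with $p$ a projection, then $u=-2p$ is a unit, so $2p$ is a unit with some inverse $y$, and then $p=p(2p)y=(2p)y=1$ using $p^2=p$; thus $u=-2\in U(R)$. Either way $2\in U(R)$. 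Once this is established, I reverse the substitution: for arbitrary $b\in R$ I apply (2) to $a=2b+1$. If $a=u+v$ with $v$ a self-adjoint square root of $1$, then $b=\tfrac{1}{2}u-q$ with $q=\tfrac{1}{2}(1-v)\in P(R)$ and $\tfrac{1}{2}u\in U(R)$, a type-$2$ weakly $\star$-clean decomposition; if $a=u+(2p+1)$, then $b=\tfrac{1}{2}u+p$ is a type-$1$ decomposition. Hence $R$ is weakly $\star$-clean.

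I expect the main obstacle to be exactly this extraction of $2\in U(R)$ in the converse: it depends on the particular substitution $a=1$ and on the two cancellation arguments (the vanishing of $1+v$ and the collapse $p=1$), without which one cannot even form the inverse substitution. All the remaining manipulations are routine verifications once the projection / self-adjoint-square-root correspondence and the invertibility of $2$ are in place.
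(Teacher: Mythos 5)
Your proposal is correct and follows essentially the same route as the paper: the substitution $a\mapsto\tfrac{1}{2}(a-1)$ (resp.\ $a\mapsto 2a+1$) together with the correspondence $p\leftrightarrow 1-2p$ between projections and self-adjoint square roots of $1$, and recovering $2\in U(R)$ by applying (2) to $a=1$. The only difference is cosmetic: where the paper cites Theorem 2.5 of its reference [11] for the case $1=u+x$, you supply the direct cancellation argument $(1-v)(1+v)=0\Rightarrow v=-1\Rightarrow u=2$, which makes the proof self-contained.
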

\begin{proof}
  $(1)\Rightarrow (2)$. Consider $a\in R$, then $\frac{a-1}{2}=u\pm p$, where $u\in U(R)$ and $p\in P(R)$. If $\frac{a-1}{2}=u-p$, then $a=2u+(1-2p)$, where $2u\in U(R)$ and $1-2p$ is a self adjoint square root of 1. If $\frac{a-1}{2}=u+p$, then $a=2u+(1+2p)$, where $p^2=p=p^{\star}$. \\
$(2)\Rightarrow (1)$ First we show that $2\in (R)$. By assumption, $1=u+x$ or $1=u+(2p+1)$, where $u\in U(R)$, $x$ is self adjoint square root of $1$ and $p\in P(R)$. If $1=u+x$, then clearly $2\in U(R)$ by Theorem 2.5 \cite{11}. If $1=u+(2p+1)$, then $u=-2p$, so $2\in U(R)$. Next for showing $R$ is weakly $\star$-clean ring, let $a\in R$, so $2a+1=u+x$ or $2a+1=u+(2p+1)$, where $u\in U(R)$, $x$ is self adjoint square root of $1$ and $p\in P(R)$.\\
Case I: If $2a+1=u+x$, then $a=\frac{u}{2}-\frac{1-x}{2}$. Since $(\frac{1-x}{2})^2=\frac{1-x}{2}=(\frac{1-x}{2})^{\star}$ and $2\in (R)$, so $a$ is weakly $\star$-clean element. \\
Case II: If $2a+1=u+(2p+1)$, then $a=\frac{u}{2}+p$, a $\star$-clean element.
\end{proof}
\begin{Lem}
  Let $R$ be weakly $\star$-clean ring. If $I$ is a $\star$ invariant ideal of $R$, then $R/I$ is weakly $\star$-clean. In particular, $R/J(R)$ is weakly $\star$-clean ring.
\end{Lem}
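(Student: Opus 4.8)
The plan is to first transfer the involution to the quotient and then push a weakly $\star$-clean decomposition of a chosen lift through the canonical surjection. First I would check that setting $(a+I)^{\star} = a^{\star}+I$ gives a well-defined involution on $R/I$. Well-definedness is exactly the point at which the $\star$-invariance of $I$ is used: if $a-b\in I$, then $(a-b)^{\star} = a^{\star}-b^{\star}\in I^{\star}=I$, so $a^{\star}+I = b^{\star}+I$. The three involution axioms for $R/I$ then descend at once from those of $R$ together with $(a+I)(b+I)=ab+I$; in particular the anti-multiplicative law $(xy)^{\star}=y^{\star}x^{\star}$ survives passage to the quotient. Thus $R/I$ is again a $\star$-ring and $P(R/I)$ is meaningful.

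Next, let $\pi:R\rightarrow R/I$ be the canonical epimorphism and fix $\overline{x}=\pi(x)\in R/I$. Since $R$ is weakly $\star$-clean, $x=u+p$ or $x=u-p$ with $u\in U(R)$ and $p\in P(R)$. A surjective ring homomorphism sends units to units, since $uv=vu=1$ gives $\pi(u)\pi(v)=\pi(v)\pi(u)=1$, so $\pi(u)\in U(R/I)$. For the projection, $p^2=p$ forces $\pi(p)^2=\pi(p)$, while the definition of $\star$ on $R/I$ gives $\pi(p)^{\star}=\pi(p^{\star})=\pi(p)$; hence $\pi(p)\in P(R/I)$. Applying $\pi$ to the chosen decomposition yields $\overline{x}=\pi(u)+\pi(p)$ or $\overline{x}=\pi(u)-\pi(p)$, so every element of $R/I$ is weakly $\star$-clean.

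Finally, for the particular case $I=J(R)$ it remains only to verify that $J(R)$ is $\star$-invariant, after which the general argument applies verbatim. The key observation is that $\star$ is an anti-automorphism of $R$, that is, a ring isomorphism $R\rightarrow R^{\mathrm{op}}$, and ring isomorphisms preserve the Jacobson radical; combining this with the standard left-right symmetry $J(R^{\mathrm{op}})=J(R)$ gives $J(R)^{\star}=J(R)$. This verification that $J(R)$ is $\star$-invariant is the only step that is not a routine check; the well-definedness of the quotient involution and the transport of the decomposition are direct consequences of the $\star$-invariance hypothesis and the surjectivity of $\pi$.
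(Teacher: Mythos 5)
Your proof is correct and complete. The paper itself gives no argument here: it simply defers to Lemma 2.7 of the cited note of Wang and Cui on $\star$-clean rings, which handles the analogous statement in the $\star$-clean setting. What you have written is the self-contained version of that argument: the $\star$-invariance of $I$ makes $(a+I)^{\star}=a^{\star}+I$ a well-defined involution on $R/I$, and then the canonical surjection carries units to units and projections to projections, so a decomposition $x=u\pm p$ descends to $\overline{x}=\overline{u}\pm\overline{p}$. Your treatment of the special case is actually the more valuable part: the paper asserts that $R/J(R)$ is weakly $\star$-clean without checking that $J(R)$ is $\star$-invariant, whereas you supply the needed observation that $\star$ is an isomorphism onto $R^{\mathrm{op}}$ and $J(R^{\mathrm{op}})=J(R)$, hence $J(R)^{\star}=J(R)$. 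In short, your route is the same in substance but fills in every step the paper outsources, and the $J(R)$ verification is a genuine addition rather than a routine restatement.
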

\begin{proof}
  The result follows from 
  Lemma 2.7 $\cite{11}$.
\end{proof}
Let $R$ be a $\star$-ring. Then $\star$ induces an involution in the power series ring $R[[x]]$, defined by $(\sum_{i=0}^{\infty}a_ix^i)^{\star}=\sum_{i=0}^{\infty}a_i^{\star}x^i$.
\begin{Prop}
  Let $R$ be a $\star$-ring. Then $R[[x]]$ is weakly $\star$-clean \textit{if and only if} $R$ is weakly $\star$-clean.
\end{Prop}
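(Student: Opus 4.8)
The plan is to prove both implications by exploiting the two standard features of the power series ring $R[[x]]$: a series $f=\sum_{i\ge 0}a_ix^i$ is a unit precisely when its constant term $a_0$ is a unit in $R$, and the involution on $R[[x]]$ acts coefficientwise, hence respects the grading. These two facts will let me reduce everything to the constant term.

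For the direction ``$R$ weakly $\star$-clean $\Rightarrow R[[x]]$ weakly $\star$-clean'', I would take an arbitrary $f=\sum_{i\ge 0}a_ix^i$ and decompose only its constant term. Since $R$ is weakly $\star$-clean, write $a_0=v+q$ or $a_0=v-q$ with $v\in U(R)$ and $q\in P(R)$. Regard $q$ as a constant series in $R[[x]]$; it is still idempotent, and since $\star$ is applied coefficientwise it still satisfies $q^{\star}=q$, so $q\in P(R[[x]])$. In the first case put $u:=f-q$, whose constant term is $a_0-q=v\in U(R)$, so $u\in U(R[[x]])$ and $f=u+q$; in the second case put $u:=f+q$, whose constant term is again $v\in U(R)$, giving $f=u-q$. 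Either way $f$ is weakly $\star$-clean. The point worth highlighting is that one never needs to understand the full projection set of $R[[x]]$, which in the noncommutative case can contain non-constant elements; the constant projections coming from $R$ already suffice to carry the decomposition through.

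For the converse, I would use that the ideal $(x)=xR[[x]]$, consisting of all series with zero constant term, is $\star$-invariant: if $g$ has zero constant term then so does $g^{\star}$, again because $\star$ is coefficientwise. The evaluation map $R[[x]]\to R$ sending $x\mapsto 0$ is then a surjective $\star$-ring homomorphism with kernel $(x)$, so $R\cong R[[x]]/(x)$ as $\star$-rings. Hence if $R[[x]]$ is weakly $\star$-clean, applying Lemma 4.8 to the $\star$-invariant ideal $(x)$ shows that $R[[x]]/(x)\cong R$ is weakly $\star$-clean.

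I expect no serious obstacle: the computations are routine once the two reductions are in place. The only items requiring care are the bookkeeping on the involution, namely verifying that a constant projection of $R$ remains self-adjoint when viewed in $R[[x]]$ and that $(x)$ is genuinely $\star$-closed, both of which are immediate from the coefficientwise definition of $\star$. The single conceptual step is the observation that, in contrast to idempotents (which may be non-constant in $R[[x]]$), working with constant projections is enough to establish the forward implication.
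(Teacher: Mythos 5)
Your proof is correct and follows essentially the same route as the paper: for the forward implication you quotient by the $\star$-invariant ideal $(x)$ and invoke Lemma 4.8, and for the converse you decompose only the constant term $a_0=v\pm q$ and absorb the higher-order terms into the unit, exactly as the paper does. The extra remarks on why constant projections suffice and why $(x)$ is $\star$-closed are sound but do not change the argument.
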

\begin{proof}
  Let $R[[x]]$ be weakly $\star$-clean. Since $R\cong R[[x]]/<x>$ and $<x>$ is $\star$-invariant ideal of $R[[x]]$. So $R$ is weakly $\star$-clean ring. Conversely, suppose that $R$ is weakly $\star$-clean ring. Let $g(x)=\sum_{i=0}^{\infty}a_ix^i \in R[[x]]$. Write $a_0=u\pm p$ with $p\in P(R)$ and $u\in U(R)$. Then $g(x)=\pm p+(u+\sum_{i=1}^{\infty}a_ix^i)$, where $p\in P(R)\subseteq P(R[[x]])$ and $u+\sum_{i=1}^{\infty}a_ix^i\in U(R[[x]])$. Hence $g(x)$ is weakly $\star$-clean in $R[[x]]$.
\end{proof}
\begin{Thm}
  Homomorphic image of weakly $\star$-clean ring is weakly $\star$-clean.
\end{Thm}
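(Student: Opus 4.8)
The plan is to mimic the argument already used for weakly $r$-clean rings in Theorem 2.2 and for the quotient result of Lemma 4.9, the only extra ingredient being that the surjection must respect the involution. So let $\phi\colon R\to S$ be a surjective $\star$-ring homomorphism, meaning $\phi$ is an onto ring homomorphism with $\phi(1)=1$ satisfying $\phi(x^\star)=\phi(x)^\star$ for all $x\in R$; I must show that $S$ is weakly $\star$-clean whenever $R$ is.

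First I would fix an arbitrary $s\in S$ and, using surjectivity, choose a preimage $x\in R$ with $\phi(x)=s$. Since $R$ is weakly $\star$-clean, $x$ admits a decomposition $x=u+p$ or $x=u-p$ with $u\in U(R)$ and $p\in P(R)$. Applying $\phi$ yields $s=\phi(u)\pm\phi(p)$, so it remains only to verify that $\phi$ sends units to units and projections to projections.

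The unit part is routine: from $uu^{-1}=1=u^{-1}u$ and $\phi(1)=1$ we obtain $\phi(u)\phi(u^{-1})=1=\phi(u^{-1})\phi(u)$, so $\phi(u)\in U(S)$. The projection part is where the hypothesis that $\phi$ is a $\star$-homomorphism becomes essential: from $p^2=p$ we get $\phi(p)^2=\phi(p^2)=\phi(p)$, and from $p^\star=p$ together with $\phi(x^\star)=\phi(x)^\star$ we get $\phi(p)^\star=\phi(p^\star)=\phi(p)$; hence $\phi(p)\in P(S)$. Combining these, $s=\phi(u)\pm\phi(p)$ is a weakly $\star$-clean decomposition in $S$, and since $s$ was arbitrary, $S$ is weakly $\star$-clean.

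The main (indeed only) obstacle worth flagging is the interpretation of \emph{homomorphic image}: the conclusion can fail for an arbitrary ring surjection that ignores the involution, since the image of a projection need not be self-adjoint in $S$. Once one insists that the map intertwine the two involutions---the natural notion of morphism between $\star$-rings---the computation is immediate and parallels the reasoning of Lemma 4.9.
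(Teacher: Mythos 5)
Your proof is correct; the paper in fact states this theorem with no proof at all (treating it as obvious, in parallel with Theorem 2.2), and your direct verification---lift $s$ to $x=u\pm p$, then check that a unital surjection sends units to units and a $\star$-preserving surjection sends projections to projections---is exactly the argument the paper implicitly relies on and is consistent with its Lemma 4.9 on quotients by $\star$-invariant ideals. Your caveat that ``homomorphic image'' must be read as a $\star$-ring homomorphism (equivalently, a quotient by a $\star$-invariant ideal) is a genuine and worthwhile precision that the paper glosses over, since for a surjection ignoring the involution the image of a projection need not be self-adjoint.
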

Similarly we extend the Theorem 2.3 to $\star$-clean ring given below.
\begin{Thm}
  Let $\{R_\alpha\}$ be a collection of $\star$-rings. Then the direct product $R=\underset{\alpha}{\prod}R_{\alpha}$ is weakly $\star$-clean \textit{if and only if} each $R_{\alpha}$ is weakly $\star$-clean ring and at most one $R_{\alpha}$ is not $\star$-clean.
\end{Thm}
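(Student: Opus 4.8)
The plan is to mirror the proof of Theorem 2.3, replacing regular elements by units and idempotents by projections, and keeping track of the involution. First I would record the coordinatewise structure of the product $\star$-ring. Equipping $R=\prod_{\alpha}R_{\alpha}$ with the componentwise involution $(x_{\alpha})^{\star}=(x_{\alpha}^{\star})$, one checks at once that $U(R)=\prod_{\alpha}U(R_{\alpha})$, and that $p=(p_{\alpha})$ satisfies $p^{2}=p=p^{\star}$ precisely when each $p_{\alpha}$ does, so that $P(R)=\prod_{\alpha}P(R_{\alpha})$. In particular every canonical projection $\pi_{\alpha}\colon R\to R_{\alpha}$ is a surjective $\star$-homomorphism, which is what lets Theorem 4.10 feed into the argument.

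The engine of both directions is a negation symmetry, which I would isolate first. For a weakly $\star$-clean element one has $x=u+p$ if and only if $-x=(-u)-p$, and $x=u-p$ if and only if $-x=(-u)+p$; thus the map $x\mapsto -x$ interchanges the two decomposition types while leaving the projection $p$ fixed and merely negating the unit. Two consequences follow. First, in a $\star$-clean ring every element admits a decomposition of \emph{each} sign: applying the symmetry to the (type-$1$) $\star$-clean decomposition of $-x$ produces a type-$2$ decomposition of $x$. Second, a ring that is weakly $\star$-clean but not $\star$-clean must contain an element with a type-$2$ but no type-$1$ decomposition, and hence, by negating it, also an element with a type-$1$ but no type-$2$ decomposition.

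For the forward direction, each $R_{\alpha}=\pi_{\alpha}(R)$ is weakly $\star$-clean by Theorem 4.10. To see that at most one factor can fail to be $\star$-clean, suppose $R_{\alpha_{1}}$ and $R_{\alpha_{2}}$ both fail, with $\alpha_{1}\neq\alpha_{2}$. Using the observation above, choose $x_{\alpha_{1}}\in R_{\alpha_{1}}$ with a type-$1$ but no type-$2$ decomposition and $x_{\alpha_{2}}\in R_{\alpha_{2}}$ with a type-$2$ but no type-$1$ decomposition, and form $x=(x_{\alpha})\in R$ equal to these in coordinates $\alpha_{1},\alpha_{2}$ and equal to $0$ elsewhere. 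Since $U(R)$ and $P(R)$ are coordinatewise, a global decomposition $x=u+p$ would force the $\alpha_{2}$-coordinate into type-$1$ form, which is impossible, while $x=u-p$ would force the $\alpha_{1}$-coordinate into type-$2$ form, equally impossible. Hence $x$ is not weakly $\star$-clean, contradicting the hypothesis on $R$. For the converse, if every $R_{\alpha}$ is $\star$-clean then assembling the coordinatewise type-$1$ decompositions shows $R$ is $\star$-clean, hence weakly $\star$-clean. If exactly one factor $R_{\alpha_{0}}$ is weakly-but-not-$\star$-clean and all others are $\star$-clean, then given $x=(x_{\alpha})$ I would decompose $x_{\alpha_{0}}=u_{\alpha_{0}}\pm p_{\alpha_{0}}$ and, invoking the fact that each $\star$-clean factor supplies a decomposition of whichever sign is required, decompose every remaining coordinate with the same sign; reassembling yields $x=u\pm p$ with $u\in U(R)$, $p\in P(R)$.

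The main obstacle is not computational but a matter of careful bookkeeping around the negation symmetry: one must remember that under $x\mapsto -x$ the projection is \emph{unchanged} (it is $-p$, not $p$, that would fail $(-p)^{2}=-p$) and only the unit is negated, and one must verify the consequence that $\star$-cleanness of a factor forces decompositions of both signs. This last point is exactly what makes the sign-matching in the converse legitimate, and it is the only place where the $\star$-clean setting could differ from the weakly $r$-clean setting of Theorem 2.3; once it is in hand, the index-manipulation argument transfers verbatim.
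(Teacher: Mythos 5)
Your proposal is correct and follows the same route as the paper, which simply states that the proof is ``similar to the proof of Theorem 2.3''; you carry out exactly that adaptation, replacing regular elements by units and idempotents by projections. Your explicit isolation of the negation symmetry (that $x=u+p$ iff $-x=(-u)-p$, so $\star$-clean factors admit decompositions of both signs) is the one point the paper leaves implicit, and it is handled correctly.
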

\begin{proof} Similar to the proof of Theorem 2.3.
\end{proof}
\begin{Def}
  An element $x$ in a $\star$-ring $R$ is said to be $\star$-$r$-clean if $x=r+p$ where $r\in Reg(R)$ and $p\in P(R)$. A $\star$-ring $R$ is said to be $\star$-$r$-clean ring if every element of $R$ is $\star$-$r$-clean.
\end{Def}
\begin{Prop}
  Let $R$ be a $\star$-ring and $e\in P(R)$. If $a\in eRe$ is strongly $\star$-clean in $eRe$ then $a$ is strongly $\star$-clean in $R$.
\end{Prop}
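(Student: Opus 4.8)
The plan is to imitate the corner-ring lifting argument of Proposition 3.6 (and Lemma 2.5, i.e. the lemma labelled \ref{10}), adjusting it so that the constructed idempotent is self-adjoint. Since $a$ is strongly $\star$-clean in $eRe$, write $a = u + p$ with $u \in U(eRe)$, $p \in P(eRe)$ (so $p^2 = p = p^{\star}$ and $p = epe$), and, because the decomposition is \emph{strong}, $up = pu$. I would first record two structural observations that make the passage to $R$ possible: because $e^{\star} = e$, the corner $eRe$ is $\star$-invariant and $\star$ restricts to an involution there, so $P(eRe)$ and $U(eRe)$ are meaningful; and $1 - e$ is itself a projection of $R$, since $(1-e)^2 = 1 - e$ and $(1-e)^{\star} = 1 - e^{\star} = 1 - e$.

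The central construction is to set $v = u - (1-e)$ and $q = p + (1-e)$. First I would check $v \in U(R)$: picking $w \in eRe$ with $uw = wu = e$, the element $w - (1-e)$ is a two-sided inverse of $v$, because $u(1-e) = 0 = (1-e)w$ (as $u, w \in eRe$), whence $v\bigl(w - (1-e)\bigr) = uw + (1-e)^2 = e + (1-e) = 1$, and symmetrically on the other side. Then $a - v = (u+p) - \bigl(u - (1-e)\bigr) = p + (1-e) = q$, so $a = v + q$. Next I would verify $q \in P(R)$: idempotency follows from $p(1-e) = 0 = (1-e)p$, giving $q^2 = p^2 + (1-e)^2 = p + (1-e) = q$; and the involution enters precisely here, since $q^{\star} = p^{\star} + (1-e)^{\star} = p + (1-e) = q$, using both $p^{\star} = p$ and $e^{\star} = e$. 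Finally, for strongness, a direct expansion using $u(1-e) = 0 = (1-e)u$ and $p(1-e) = 0 = (1-e)p$ collapses $vq$ to $up - (1-e)$ and $qv$ to $pu - (1-e)$, so that $vq = qv$ is equivalent to $up = pu$, which holds by hypothesis. Hence $a = v + q$ is a strong $\star$-clean decomposition in $R$.

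I do not anticipate any serious obstacle: the computation is essentially the strongly-clean corner argument already used in Proposition 3.6. The only genuinely new point is confirming that the auxiliary element $1 - e$ is a true \emph{projection} and that the lifted idempotent $q$ is self-adjoint; both rely on $e^{\star} = e$ rather than mere idempotency of $e$, which is exactly what the hypothesis $e \in P(R)$ supplies and the single place where the $\star$-structure is actually used.
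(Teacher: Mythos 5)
Your proposal is correct and follows essentially the same route as the paper's own proof: both lift the corner decomposition via the unit $u-(1-e)$ (with inverse $w-(1-e)$) and the projection $p+(1-e)$. In fact you are slightly more careful than the paper, which asserts $f+(1-e)\in P(R)$ and the strongness of the lifted decomposition without writing out the self-adjointness check $q^{\star}=q$ or the commutation $vq=qv \Leftrightarrow up=pu$ that you verify explicitly.
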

\begin{proof}
Let $a=f+v$, where $f\in P(eRe)$ and $v\in U(eRe)$, so there exists $w\in U(eRe)$ such that $vw=e=wv$. Clearly $(v-(1-e))(w-(1-e))=1$ implies $v-(1-e)\in U(R)$ and $a-(v-(1-e))=f+(1-e)\in P(R)$. Hence $a$ is strongly $\star$ clean in $R$.
\end{proof}
\begin{Prop}
   Let $R$ be an abelian $\star$-ring. If $a$ is $\star$-clean element in $R$ then $ae$ is $\star$-clean for any $e\in P(R)$.
\end{Prop}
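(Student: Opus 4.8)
The plan is to mirror the clean-ring argument of Proposition 2.6 (Case 1), replacing idempotents by projections and keeping careful track of the involution. Since $a$ is $\star$-clean, write $a=u+p$ with $u\in U(R)$ and $p\in P(R)$. Because $R$ is abelian, the projection $e$ is central, so $ae=(u+p)e=ue+pe$, and I would first read this as a $\star$-clean decomposition inside the corner ring $eRe$ (whose identity is $e$). Indeed $ue$ is a unit of $eRe$ with inverse $u^{-1}e$, since centrality of $e$ gives $(ue)(u^{-1}e)=uu^{-1}e=e=(u^{-1}e)(ue)$; and $pe\in P(eRe)$, because $(pe)^2=p^2e^2=pe$ while $(pe)^{\star}=e^{\star}p^{\star}=ep=pe$, using that $e,p$ are self-adjoint and that idempotents commute in an abelian ring.

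Having written $ae=ue+pe$ as a $\star$-clean element of $eRe$, the second step is to lift the decomposition to $R$ by the very computation appearing in the proof of Proposition 4.13 (note that that computation nowhere uses the ``strongly'' hypothesis, so it applies to an ordinary $\star$-clean element). Put $v=ue$ and $f=pe$. Since $v\in eRe$ we have $v(1-e)=0=(1-e)v$, and hence $(v-(1-e))(u^{-1}e-(1-e))=e+(1-e)=1$, so $v-(1-e)\in U(R)$. Subtracting, $ae-(v-(1-e))=f+(1-e)$, and $f+(1-e)\in P(R)$: it is idempotent because $f(1-e)=0=(1-e)f$ gives $(f+(1-e))^2=f^2+(1-e)^2=f+(1-e)$, and it is self-adjoint because $f$ and $1-e$ are each self-adjoint. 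Therefore $ae=(v-(1-e))+(f+(1-e))$ is a $\star$-clean decomposition in $R$.

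I expect the only delicate point to be the bookkeeping of the involution rather than the unit arithmetic. At each stage one must check that the idempotent factor produced is genuinely a projection, and this is exactly where abelianness enters: it yields $pe=ep$ and $f(1-e)=(1-e)f=0$, which force $(pe)^{\star}=pe$ and $(f+(1-e))^{\star}=f+(1-e)$. The unit part is the same algebraic identity already verified in Proposition 4.13 and transfers verbatim, so no genuine obstacle arises there.
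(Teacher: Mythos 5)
Your proof is correct and follows essentially the same route as the paper: write $ae=ue+pe$ as a $\star$-clean element of the corner ring $eRe$ and then lift to $R$ via the unit-and-projection adjustment of Proposition 4.13 (which the paper simply cites, while you re-derive it and rightly note that the ``strongly'' hypothesis plays no role there). The extra care you take in checking that $pe$ and $pe+(1-e)$ are genuine projections is exactly the bookkeeping the paper leaves implicit, and it all checks out.
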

\begin{proof}
  Let $a=u+p_1$, where $u\in U(R)$ and $p_1\in P(R)$. Now $ae=ue+p_1e$. Clearly $ue\in U(eRe)$ and $p_1e\in P(eRe)$ imply $ae$ is strongly $\star$-clean in $eRe$, so $ae$ is strongly $\star$-clean in $R$.
\end{proof}
\begin{Prop}
  Let $R$ be an abelian $\star$-ring and $a$ be a $\star$-clean element in $R$ and $e\in P(R)$. If $-a$ is $\star$-clean then $a+e$ is also $\star$-clean.
\end{Prop}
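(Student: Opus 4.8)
The plan is to exploit the fact that, since $R$ is abelian, the projection $e$ is a central idempotent with $e^{\star}=e$, so that it induces a Peirce splitting $R=eRe\oplus(1-e)R(1-e)$ into two orthogonal $\star$-corners, and to produce the $\star$-clean decomposition of $a+e$ one corner at a time. This mirrors Lemma 2.7(i) (the ordinary clean analogue) and uses the same corner philosophy as the two preceding $\star$-clean corner propositions, the only new feature being that we must choose which of the two decompositions of $a$ to feed into each corner.

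In the $(1-e)$-corner the added idempotent $e$ contributes nothing, so I would simply use the hypothesis that $a$ is $\star$-clean: writing $a=u+p_1$ with $u\in U(R)$ and $p_1\in P(R)$ gives $(a+e)(1-e)=a(1-e)=u(1-e)+p_1(1-e)$, where $u(1-e)\in U\big((1-e)R(1-e)\big)$ and $p_1(1-e)\in P\big((1-e)R(1-e)\big)$. In the $e$-corner the stray term $+e$ appears, and here the decomposition of $a$ is useless because $p_1e+e$ fails to be idempotent. This is exactly where the hypothesis that $-a$ be $\star$-clean enters: writing $-a=u'+p_2$, i.e.\ $a=-u'-p_2$, I would compute $(a+e)e=ae+e=-u'e+e(1-p_2)$, where $-u'e$ is a unit in $eRe$ and $e(1-p_2)$ is a projection in $eRe$. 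Thus the $+e$ is absorbed cleanly into a projection precisely because we use the $-a$-decomposition rather than the $a$-decomposition on this corner.

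It then remains to recombine the two corner decompositions. I would set $v=u(1-e)-u'e$ and $q=p_1(1-e)+e(1-p_2)$, so that $a+e=v+q$, and verify that $v\in U(R)$ with inverse $u^{-1}(1-e)-u'^{-1}e$ (all cross terms vanishing by orthogonality of $e$ and $1-e$), and that $q\in P(R)$ as an orthogonal sum of two commuting idempotents that is fixed by $\star$. Concretely, $q^{2}=q$ follows from abelianness, while $q^{\star}=q$ follows from $e^{\star}=e$, $p_1^{\star}=p_1$, $p_2^{\star}=p_2$ together with the anti-automorphism property of $\star$ and the commutativity of idempotents.

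The routine verifications are the idempotency and self-adjointness of the blocks $e(1-p_2)$ and $p_1(1-e)$, and the fact that an orthogonal sum of a unit of $eRe$ and a unit of $(1-e)R(1-e)$ is a unit of $R$; none of these is a real obstacle given the abelian hypothesis. The one genuinely substantive point—and the conceptual heart of the argument—is recognizing that the $+e$ must be handled on the $e$-corner by the $-a$ decomposition, which is what forces, and explains, the hypothesis that $-a$ be $\star$-clean.
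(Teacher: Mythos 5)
Your proof is correct and is essentially the same as the paper's: the paper writes $a+e=(1+a)e+a(1-e)$ and feeds the $\star$-clean decomposition of $1+a$ (obtained from that of $-a$ via $-a=u'+p_2\Rightarrow 1+a=-u'+(1-p_2)$) into the $e$-corner and that of $a$ into the $(1-e)$-corner, which is exactly your splitting with $v=-u'$ and $g=1-p_2$. The only difference is presentational: you inline the passage from $-a$ to $1+a$ rather than stating it as a separate step.
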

\begin{proof}
  Clearly $a$ and $a+1$ are $\star$-clean, as $a$ is $\star$-clean implies $1-a$ is so. Let $a=u+f$ and $1+a=v+g$, where $f,g\in P(R)$ and $u,v \in U(R)$. $a+e=(1+a)e+a(1-e)=(ve+u(1-e))+(ge+f(1-e))$. But $(ve+u(1-e))\in U(R)$ and $(ge+f(1-e))\in P(R)$. So $a+e$ is $\star$-clean.
\end{proof}
\begin{Lem}
  Let $R$ be an abelian $\star$-ring where every idempotent of the form $ry$ or $yr$ is projection, for any regular element $r$, then $r$ is $\star$-clean.
\end{Lem}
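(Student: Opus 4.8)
The plan is to show that the (fixed) regular element $r$ has a decomposition $r=u+p$ with $u\in U(R)$ and $p\in P(R)$, by first manufacturing a clean decomposition $r=u+e'$ with $e'$ a central idempotent, and then using the hypothesis to turn the idempotent part into a projection. First I would upgrade the quasi-inverse to a reflexive one: since $r\in Reg(R)$ there is $y$ with $r=ryr$, and setting $y'=yry$ yields $ry'r=r$ together with $y'ry'=y'$. I then put $e=ry'$; a one-line check gives $e^2=ry'ry'=(ry'r)y'=ry'=e$, so $e$ is an idempotent of the form $ry'$ with $r\in Reg(R)$, hence a projection by hypothesis, and therefore $1-e\in P(R)$. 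Because $R$ is abelian, $e$ is also central.

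Next I would record the relations that collapse the corner computation. From $e=ry'$ and reflexivity one has $er=ry'r=r$, and centrality promotes this to $re=r$; likewise $y'e=y'ry'=y'$, and by centrality $ey'=y'$. The decisive point is that the two idempotents $e=ry'$ and $f=y'r$ actually coincide: using $ey'=y'$ and $y'f=y'$ one computes $ef=(ey')r=y'r=f$ and also $ef=r(y'f)=ry'=e$, whence $e=f$. This symmetric collapse, which rests on both the reflexivity of $y'$ and the abelian hypothesis, is the step I expect to be the main obstacle, because without $e=f$ the inverse proposed below is only one-sided, and a genuine (two-sided) unit is exactly what a $\star$-clean decomposition requires.

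Finally I would exhibit the unit. Setting $u=r-(1-e)$ and $v=y'-(1-e)$, the identities $r(1-e)=(1-e)r=0$ (from $er=re=r$), $(1-e)y'=y'(1-e)=0$ (from $ey'=y'e=y'$), $ry'=e$, $y'r=f=e$, and $(1-e)^2=1-e$ give $uv=e+(1-e)=1$ and $vu=e+(1-e)=1$, so $u\in U(R)$. Then $r=u+(1-e)$ with $u\in U(R)$ and $1-e\in P(R)$, which is precisely a $\star$-clean decomposition of $r$. I would close with the remark that the sign is chosen on purpose: the alternative grouping $r=[\,r+(1-e)\,]-(1-e)$ uses the equally-valid unit $r+(1-e)$ but only witnesses the weak form $x=u-p$, so taking $u=r-(1-e)$ rather than $r+(1-e)$ is exactly what produces a genuine $\star$-clean (sum) decomposition.
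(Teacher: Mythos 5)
Your proof is correct and follows essentially the same route as the paper's: both arrive at the decomposition $r=[\,r-(1-e)\,]+(1-e)$ in which $e$ is an idempotent of the form $ar$ with $r$ regular (hence a projection by hypothesis, so $1-e\in P(R)$) and $r-(1-e)$ is a unit. The only real difference is how the unit is certified: the paper builds $e=f+(1-f)rf$ with $f=yr$ and appeals to Nicholson's Proposition 1.8 for the inverse $a-b$, whereas you pass to the reflexive quasi-inverse $y'=yry$ and exhibit the two-sided inverse $y'-(1-e)$ explicitly, your identity $e=ry'=y'r$ being exactly the point that makes the inverse two-sided; this renders the argument self-contained but is not a different method.
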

\begin{proof}
  Let $r\in Reg(R)$ then $r=ryr$, for some $y\in R$. So $f=yr\in P(R)$. Let $e=f+(1-f)rf=(y+(1-f)ry)r=ar $, where $a=y+(1-f)ry$. Here $e^2=e\in P(R)$ and $(1-e)=(1-f)(1-r)=b(1-r)\in R(1-r)$, where $b=(1-f)$. Assume $ea=a$ so that $ara=a$. Since idempotents are central so $ra=r(ar)a=ra(ra)=(ra)ra=a(ra)r=ar$. Similarly $(1-r)b=b(1-r)$, where it is assumed that $(1-e)b=b$. By Proposition 1.8 \cite{8}, $a-b$ is the inverse of $r-(1-e)$ and so $r$ is $\star$-clean.
\end{proof}

\begin{Thm}
  Let $R$ be an abelian $\star$-clean, where any idempotent of the form $e=ry$ or $yr$ is projection, for any $r\in Reg(R)$. Then $R$ is $\star$-$r$-clean \textit{if and only if} $R$ is $\star$-clean.
\end{Thm}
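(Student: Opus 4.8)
The plan is to prove the two implications separately, treating the reverse implication as essentially immediate and placing all of the content in the forward implication. Throughout I will use $\star$-clean to mean a decomposition $x=u+p$ with $u\in U(R)$ and $p\in P(R)$, and $\star$-$r$-clean to mean $x=r+p$ with $r\in Reg(R)$ and $p\in P(R)$, as in Definition 4.12.

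For the direction ``$R$ is $\star$-clean $\Rightarrow$ $R$ is $\star$-$r$-clean'' I would simply note that every unit is regular, since $u=uu^{-1}u$, so $U(R)\subseteq Reg(R)$. Hence any $\star$-clean decomposition $x=u+p$ is already a $\star$-$r$-clean decomposition. This direction uses neither abelianness nor the idempotent-projection hypothesis.

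For the converse ``$R$ is $\star$-$r$-clean $\Rightarrow$ $R$ is $\star$-clean'' I would take an arbitrary $x\in R$ and write $x=r+p$ with $r\in Reg(R)$ and $p\in P(R)$. The strategy is to show directly that $r+p$ is $\star$-clean by invoking Proposition 4.15, which states that if $a$ and $-a$ are both $\star$-clean and $e\in P(R)$, then $a+e$ is $\star$-clean. Applying it with $a=r$ and $e=p$ reduces the whole problem to verifying that both $r$ and $-r$ are $\star$-clean. Here the two key observations are: first, $r$ is regular, so by Lemma 4.16 (which applies precisely because $R$ is abelian and every idempotent of the form $ry$ or $yr$ is a projection) the element $r$ is $\star$-clean; second, $-r$ is also regular, since $r=ryr$ gives $(-r)(-y)(-r)=-ryr=-r$, so $-r\in Reg(R)$ with witness $-y$. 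Because the idempotent-projection hypothesis is assumed for every regular element of $R$, Lemma 4.16 applies equally to $-r$, so $-r$ is $\star$-clean as well. With both $r$ and $-r$ known to be $\star$-clean, Proposition 4.15 gives that $x=r+p$ is $\star$-clean, and since $x$ was arbitrary, $R$ is $\star$-clean.

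The main obstacle, or rather the only genuine step, is resisting the temptation to combine the projection $p$ with the projection arising in the clean decomposition of $r$: a sum of two projections is almost never idempotent, so $r=u+f$ does not give $x=u+(f+p)$ with $f+p$ a projection. The correct move is to route the data through Proposition 4.15, whose proof already performs the needed absorption via the abelian hypothesis. The only point requiring care is confirming that $-r$ is regular and that the hypothesis of Lemma 4.16 transfers to it; both are routine, as indicated above.
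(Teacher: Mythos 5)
Your proposal is correct and follows essentially the same route as the paper: both reduce the forward implication to showing that $r$ and $-r$ are $\star$-clean and then invoke Proposition 4.15 with $a=r$ and $e=p$. The only divergence is in handling $-r$: the paper verifies it by an explicit computation (setting $e=ry$, $u=re+(1-e)$, $f=1-e$ and writing $-r=-(ue+f)+f$), whereas you observe that $-r$ is itself regular (with witness $-y$) and re-apply Lemma 4.16; since the hypothesis is quantified over all of $Reg(R)$, your shortcut is valid and slightly cleaner.
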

\begin{proof}
  $(\Leftarrow)$ Obviously $R$ is $\star$-clean $\Rightarrow$ $R$ is $\star$-$r$-clean. \par
  $(\Rightarrow)$ Let $R$ be a $\star$-$r$-clean ring and $x\in R$, then $x=r+e'$, where $e'\in P(R)$ and $r\in Reg(R)$. Therefore $r=ryr$, for some $y\in R$. Taking $e=ry$ we see that $(re+(1-e))(ye+(1-e))=1$ and $e\in P(R)$. Hence $u=re+(1-e)$ is a unit and $r=eu$. Set $f=1-e$ then $-(eu+f)$ is a unit and $f\in P(R)$. So, $-r=-(ue+f)+f$ is $\star$-clean. Also by Lemma 4.16, $r$ is $\star$-clean. Hence by Proposition 4.15, $x=r+e'$ is $\star$-clean.
\end{proof}

\end{document}